\documentclass[11pt]{amsart}
\usepackage{pstricks,pst-plot}

\definecolor{Black}{cmyk}{0,0,0,1}
\definecolor{OrangeRed}{cmyk}{0,0.6,1,0}            
\definecolor{DarkBlue}{cmyk}{1,1,0,0.20}
\definecolor{myblue}{rgb}{0.66,0.78,1.00}
\definecolor{Violet}{cmyk}{0.79,0.88,0,0}
\definecolor{Lavender}{cmyk}{0,0.48,0,0}

\parskip=\smallskipamount

\newtheorem{theorem}{Theorem}[section]
\newtheorem{lemma}[theorem]{Lemma}

\newtheorem{proposition}[theorem]{Proposition}

\theoremstyle{definition}
\newtheorem{definition}[theorem]{Definition}
\newtheorem{example}[theorem]{Example}

\newtheorem{remark}[theorem]{Remark}

\newcommand{\bea}{\begin{eqnarray*}}
\newcommand{\eea}{\end{eqnarray*}}

\numberwithin{equation}{section}

%
%
%

\begin{document}

\title[ EXTENSION OF LINE BUNDLES]{$Q-$COMPLETE DOMAINS WITH CORNERS IN $\mathbb P^n$ AND EXTENSION OF LINE BUNDLES}
%
%

\author{John Erik Forn\ae ss, Nessim Sibony}
\author{Erlend F. Wold}
\address{J. E. Forn\ae ss, NTNU,  7491 Trondheim, Norway. john.fornass@math.ntnu.no and Mathematical Sciences Center, Tsinghua University}
\address{N. Sibony, Universit\'{e} Paris-Sud, Mathematique, F-91405 Orsay Cedex, France, 
Nessim.Sibony@math.u-psud.fr}
\address{E. F. Wold, Matematisk Institutt, Universitetet i Oslo,
Postboks 1053 Blindern, 0316 Oslo, Norway, erlendfw@math.uio.no}

\footnote{
First author supported by an NSF grant.  Third author supported by the Norwegian 
Research Council.
Keywords: Line bundles, foliations, q-convex.
2000 AMS classification. Primary: 32D15;
Secondary 32F10}

\maketitle

Dedicated to  T. Ohsawa and T. Ueda  for their sixtieth birthday\\

\today

\begin{abstract}
We show that if a compact set $X$ in $\mathbb P^n$ is laminated by holomorphic submanifolds of dimension $q$, then $\mathbb P^n \setminus X$ is $(q+1)$-complete with corners. Consider a manifold $U,$ $q$-complete with corners. Let $\mathcal N$ be a holomorphic line bundle  in the complement of a compact in $U$. We study when $\mathcal N$ extends as a holomorphic line bundle in $U.$ We give applications to the non existence of some Levi-flat foliations  in open sets in $\mathbb P^n$. The results apply in particular when $U$ is a Stein manifold of dimension $n\geq 3$.
 \end{abstract}

\section{Introduction} Let $\mathbb P^n$ be the complex projective space of dimension $n.$ Let $X$ be a compact set laminated by nonsingular varieties of dimension $1\leq q \leq n-1.$ We  prove that $\Omega:=\mathbb P^n\setminus X$ is  $(q+1)$-complete with corners. We then study the extension of holomorphic line bundles defined out of a compact set of a manifold $U,q$-complete with corners. This requires $q$ to be large enough with respect to the dimension of $U.$

We recall a few definitions.

\begin{definition}
A real $\mathcal C^2$ function $u$ defined in a complex manifold $U$ is strictly $q$-convex if the complex Hessian (Levi form) has at least $q$ strictly positive eigenvalues at every point. A complex manifold $U$ is $q$-complete if it admits a strictly $q$-convex exhaustion function.
\end{definition}
By smoothing, this is equivalent to saying that $U$ admits a $\mathcal C^\infty$ $q$-convex
exhaustion function.  
\begin{definition}
A function $\rho$ is strictly $q$-convex with corners on $U$ if for every point $p\in U$ there is a neighborhood $U_p$ and  finitely many strictly $q$-convex $\mathcal C^2$ functions $\{\rho_{p,j}\}_{j \leq \ell_p}$
on $U_p$ such that $\rho_{|U_p}= \max_{j\leq \ell_p} \{\rho_{p,j}\}.$
\end{definition}

\begin{definition}
The manifold $U$ is strictly $q$-convex with corners if it admits an exhaustion function which is strictly $q$-convex with corners outside a compact set.
\end{definition}

\begin{definition}
The manifold $U$ is strictly $q$-complete with corners if it admits an exhaustion function which is strictly $q$-convex with corners.
\end{definition}

The notion of $q$-convexity with corners was introduced by Grauert [1].The  regularization of strictly $q$-convex functions with corners were studied by Diederich-Forn\ae ss [4]. We will also use the following result by M. Peternell [12].

\begin{theorem}
If $U$ is an open set in $\mathbb P^n$ which is strictly $q$-convex with corners, then it is strictly $q$-complete with corners.
\end{theorem}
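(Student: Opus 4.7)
The plan is to modify the given exhaustion $\rho \colon U \to \R$, which by hypothesis is strictly $q$-convex with corners on $U \setminus K$ for some compact $K \subset U$, so that the modified function is strictly $q$-convex with corners on all of $U$. The modification will be by max-gluing with an auxiliary function, relying on the elementary fact that a maximum of finitely many strictly $q$-convex (with corners) functions is again strictly $q$-convex with corners: if $\rho_1 = \max_j \rho_{1,j}$ and $\rho_2 = \max_k \rho_{2,k}$ locally with each piece strictly $q$-convex and $C^2$, then $\max(\rho_1,\rho_2) = \max_{j,k}\{\rho_{1,j},\rho_{2,k}\}$ is of the same form.

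Concretely, I would choose $t_0 < t_1$ so that $K \Subset \{\rho < t_0\} \Subset \{\rho < t_1\} =: \Omega' \Subset U$. Assuming we have constructed (see the next paragraph) a strictly $q$-convex with corners function $\tau$ on $\Omega'$, one chooses a constant $C$ such that $\tau + C > \rho$ on $\{\rho \le t_0\}$ while $\tau + C < \rho$ on $\partial \Omega' = \{\rho = t_1\}$. This is possible because $\tau$ is bounded on the compact $\overline{\Omega'}$: any $C$ in the non-empty interval $(\max_{\{\rho \le t_0\}} \rho - \min_{\overline{\Omega'}}\tau,\ t_1 - \max_{\overline{\Omega'}}\tau)$ works. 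Set
\[
\tilde\rho := \max(\rho, \tau + C) \text{ on } \Omega', \qquad \tilde\rho := \rho \text{ on } U \setminus \overline{\Omega'}.
\]
Near $\partial\Omega'$ we have $\tau + C < \rho$, so the two definitions agree and $\tilde\rho$ is a continuous exhaustion of $U$. To verify strict $q$-convexity with corners: on the open set where $\tau + C > \rho$ (which contains a neighborhood of $K$), $\tilde\rho$ locally equals $\tau + C$, and strictness comes from $\tau$; on $\Omega' \setminus K$, both $\rho$ and $\tau + C$ are strictly $q$-convex with corners, so their max also is; on $U \setminus \Omega'$, $\tilde\rho = \rho$ is strictly $q$-convex with corners by hypothesis.

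The hard part is constructing $\tau$: a strictly $q$-convex with corners function on the arbitrary relatively compact open subset $\Omega' \subset \P^n$, which need not be Stein (so we cannot hope for a globally smooth, strictly plurisubharmonic function). The natural approach exploits the projective structure. Cover $\overline{\Omega'}$ by finitely many affine charts $W_i = \P^n \setminus \{\ell_i = 0\}$, where the $\ell_i$ are linear forms in general position. On each $W_i$, the Fubini--Study type potential $\phi_i(z) := \log(\|z\|^2/|\ell_i(z)|^2)$ is a well-defined smooth function (both numerator and denominator are homogeneous of the same degree), strictly plurisubharmonic (hence strictly $q$-convex for any $q \le n$), and blows up along $\partial W_i$. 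The aim is to combine the $\phi_i$'s into a bounded continuous function on $\Omega'$ that is locally a maximum of finitely many smooth strictly $q$-convex functions. The subtlety, and the main obstacle, is that the hyperplanes $\{\ell_i = 0\}$ may intersect $\overline{\Omega'}$ (this cannot be avoided in general, for example if $\overline{\Omega'}$ contains an algebraic curve of sufficient degree), so a naive maximum of the $\phi_i$'s is $+\infty$ along those intersections. The resolution is a careful truncation of each $\phi_i$ at a sufficiently high level together with a regularized-max patching, arranged so that near each point of $\Omega'$ only those $\phi_i$ that are finite and strictly plurisubharmonic there remain "active" in the local max; this is the technical core of Peternell's argument. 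Once $\tau$ is in hand, the gluing described above completes the proof.
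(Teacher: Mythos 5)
The paper does not actually prove this statement: it is quoted from M.~Peternell [12] and used as a black box, so there is no internal proof to compare against. Your reduction is the standard one and is essentially fine: construct an auxiliary strictly $q$-convex-with-corners function $\tau$ on a relatively compact neighborhood $\Omega'$ of the bad compact set $K$, then glue by $\tilde\rho=\max(\rho,\tau+C)$. (One small repair: for your interval of admissible constants $C$ to be nonempty you need the oscillation of $\tau$ on $\overline{\Omega'}$ to be smaller than $t_1-t_0$; replace $\tau$ by $\epsilon\tau$ for small $\epsilon>0$.) The theorem, however, lives entirely in the step you defer --- the construction of $\tau$ --- and the route you sketch for it cannot be made to work.

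There are two concrete obstructions. First, a continuous function that is locally the maximum of finitely many \emph{strictly plurisubharmonic} functions cannot exist on any neighborhood of a compact complex curve: its restriction to the curve attains a maximum at some $p$, near $p$ it dominates one of the strictly psh pieces whose restriction to the curve is strictly subharmonic and touches it from below at $p$, and this violates the sub-mean-value inequality. Since $\overline{\Omega'}$ will in general contain compact positive-dimensional analytic sets (in the paper's application $K$ is the complement of a thin tube around the laminated set $X$ and typically contains projective lines), no $\tau$ built from maxima of your Fubini--Study potentials $\phi_i$ can exist. The remark that strictly psh implies strictly $q$-convex does not help: the obstruction is to the strongest property, which is exactly the one your building blocks have. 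Second, the ``truncation at a sufficiently high level'' needed to deactivate $\phi_i$ near $\{\ell_i=0\}\cap\overline{\Omega'}$ is necessarily a cap from above (a max with a constant would make $\phi_i$ \emph{dominate} near its pole, the opposite of what you need); capping from above is a minimum, or a composition with a bounded increasing function, and neither preserves plurisubharmonicity nor is permitted by the definition of $q$-convexity with corners, which only allows maxima. The genuine proof has to use building blocks with only $q$ positive eigenvalues --- e.g.\ functions of the form $\log\bigl(\|z\|^2/\|\pi_A z\|^2\bigr)$ attached to linear subspaces $A\subset\P^n$ of dimension $q-1$, which blow up along $A$ --- and the entire difficulty of Peternell's paper is how to combine such functions over families of subspaces, necessarily involving minima, without losing the $q$ positive eigenvalues. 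As written, your argument establishes the (easy) gluing reduction but not the theorem.
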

M. Peternell proved also that the complement in $\mathbb P^n$
of a (possibly singular) analytic set of dimension $q$ is $(q+1)$-complete with corners.

We recall the notion of compact laminated set. A compact set $X$ in a complex manifold $U$ is laminated by complex manifolds of dimension $q$ if there is an atlas $\mathcal L$ on $X$ with charts $\Phi_V:V \rightarrow \mathbb B \times T_V$ where $\mathbb B$ is a $q$ dimensional ball in $\mathbb C^q,$  $T_V$ is a topological space,  $\Phi_V$ is a homeomorphism. The change of coordinates are of the form $(z,t)\rightarrow (z',t')$ with $z'=h(z,t), t'=t'(t).$ The map $h$ is holomorphic in $z.$ 
If $X=U$ and the change of coordinates $h,t'$ are holomorphic we say that we have a holomorphic foliation of dimension $q.$ In $\mathbb P^n$ it is a classical result that holomorphic foliations are always singular. More precisely, there is a nonempty analytic set $E$ of codimension at least two so that $U:=\mathbb P^n \setminus E$ is foliated as described above. One can also consider a covering $(U_i)$ of $\mathbb P^n$ and in each $(U_i)$ we have $(w_i^1, \dots,w_i^{n-q})$ holomorphic $(1,0)$ forms defining the leaves, satisfying the Frobenius condition and such that on $U_i\cap U_j$ there is an invertible holomorphic matrix $(g_{ij})$ with $(w_i)=(g_{ij})(w_j).$ 

The main results are the following:

\begin{theorem}
Let $X$ be a compact set laminated  by complex manifolds of dimension $1\leq q\leq n-1$ in $\mathbb P^n.$ Then $\Omega:=\mathbb P^n \setminus X$ is strictly $(q+1)$-complete with corners.
\end{theorem}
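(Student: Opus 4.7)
By Peternell's theorem (Theorem 1.5 above), it suffices to construct on $\Omega$ an exhaustion that is strictly $(q+1)$-convex with corners. Since $\P^n$ is compact, it is enough to produce such a function on a neighborhood of $X$ (minus $X$ itself) that tends to $+\infty$ at $X$; taking the maximum with a suitable large constant outside a compact neighborhood of $X$ then yields a global exhaustion on $\Omega$.

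The heart of the proof is a local construction around each $p_0\in X$. Choose an ambient holomorphic chart in $\P^n$ with coordinates $(z,w)\in\C^q\times\C^{n-q}$ so that the plaque $L_0$ through $p_0$ is $\{w=0\}$; after possibly shrinking, the nearby plaques of $X$ are graphs $\{w=g(z,t)\}$ with $g$ holomorphic in $z$ and continuous in the transversal parameter $t\in T_V$, a compact topological space. The model is
\[
\phi_t(z,w)=A\|z\|^2+\frac{1}{\|w-g(z,t)\|^2}\qquad(A>0).
\]
After the holomorphic substitution $w'=w-g(z,t)$, the complex Hessian of $\phi_t$ is block-diagonal in $(z,w')$: the $z$-block $A\cdot I_q$ contributes $q$ positive eigenvalues, and the $w'$-block of $1/\|w'\|^2$ contributes one further positive eigenvalue (along the radial $w'$ direction), with $n-q-1$ negative eigenvalues tangential to the sphere. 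Hence $\phi_t$ is strictly $(q+1)$-convex on the complement of the plaque $L_t=\{w=g(z,t)\}$ and blows up along $L_t$. The candidate local function is
\[
\Phi_V(z,w)=\sup_{t\in T_V}\phi_t(z,w),
\]
which blows up on all of $X\cap V$. Patching a finite family $\{\Phi_{V_i}\}$ from a finite laminated cover of $X$, by iterated maxima with suitable additive constants so each dominates near its own chart, produces a single function with the desired global blow-up behavior.

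The main obstacle is the \emph{with corners} property: one must realize $\Phi_V$ locally as a finite maximum of strictly $(q+1)$-convex $\mathcal{C}^2$ functions. At a fixed $(z_0,w_0)\in V\setminus X$, compactness of $T_V$ and continuity of $t\mapsto \phi_t(z_0,w_0)$ force the set of maximizers to be compact, and uniform continuity shows that only $t$'s in a small neighborhood of this set contribute to the sup near $(z_0,w_0)$. Reducing the sup over this continuous family to a finite local maximum uses the lamination structure in an essential way, and likely requires a Diederich-Forn\ae ss type regularization [4] of $q$-convex functions with corners to replace $\Phi_V$ by a genuine with-corners function having the same blow-up at $X$. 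This delicate step—handling the only-topological dependence on the transversal parameter $t$—is where the bulk of the technical work lies.
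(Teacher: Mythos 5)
Your overall strategy --- attach to each plaque a strictly $(q+1)$-convex function blowing up on that plaque, take the supremum over the lamination, try to reduce to a locally finite maximum, and finish with Peternell's theorem --- is the same as the paper's, which uses $-\log\sin^2 d(z,L_i)$ in the Fubini--Study metric where you use the chartwise model $A\|z\|^2+\|w-g(z,t)\|^{-2}$ (your eigenvalue count for that model is correct). However, the step you defer as ``delicate'' is precisely the content of the theorem, and the tool you point to for it is the wrong one, so the proof has a genuine gap.

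Concretely: a supremum over the compact transversal $T_V$ of strictly $(q+1)$-convex functions is not, by Definition 1.2, strictly $(q+1)$-convex with corners --- you must exhibit it locally as a maximum of \emph{finitely many} $\mathcal{C}^2$ strictly $(q+1)$-convex functions --- and if you simply discard all but finitely many plaques, the resulting function no longer blows up on all of $X$. Your localization-of-maximizers argument does not resolve this, because the set of near-maximizing parameters $t$ near a given point is still a continuum in general (e.g.\ when $T_V$ is an interval or a Cantor set). The paper's resolution has two ingredients you are missing: (i) a uniformity statement (the ``Moreover'' in its Lemma 4.1): on a compact set at positive distance from a plaque $L$, the positive eigenvalues of the plaque function associated to any sufficiently nearby plaque $L'$ are bounded below by a constant uniform in $L'$; and (ii) a dyadic shell decomposition $T_j=\{2^{-j-1}\le d(z,X)<2^{-j}\}$ of a tubular neighborhood of $X$: on each compact shell $\overline{T}_j$ one selects \emph{finitely many} plaques and adds small cutoff bumps $c_j\chi_j$ --- small relative to the uniform eigenvalue bound --- so that the finite maximum dominates $-\log\sin^2 d(z,X)$ on that shell while staying strictly $(q+1)$-convex with corners; the shells are glued inductively, and since the result is $\geq -\log\sin^2(2^{-j})$ on $T_j$ it still tends to $+\infty$ at $X$. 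Diederich--Forn\ae ss regularization cannot substitute for this step: it converts a with-corners function into a genuinely smooth one at the cost of reducing the number of positive eigenvalues from $q+1$ to roughly $\left[n/(n-q)\right]$, which would destroy the conclusion of Theorem 1.6; in the paper it is used only later, for Theorem 1.8. Two smaller defects: taking the maximum with a large constant away from $X$ does not produce a strictly $(q+1)$-convex function where the constant wins (this is unnecessary anyway, since Definition 1.3 and Peternell's theorem only require the with-corners convexity outside a compact set), and your chart-by-chart patching of the $\Phi_{V_i}$ by ``iterated maxima with additive constants'' is not well defined across chart boundaries --- the paper avoids this by building everything from the globally defined Fubini--Study distance to plaques.
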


\begin{theorem}
Let $X$ be a complex manifold with a proper Morse function $\rho:X\rightarrow (a,b)$
whose Levi form has at least three strictly positive eigenvalues at each point of $X$.  
Then every holomorphic line bundle over a set 
$$
\{x\in X: \rho(x)>c\} \mbox{ for some } c\in(a,b)
$$
extends to a holomorphic line bundle over $X$.  The same holds if $(a,b)$
is replaced by $[a,b)$.
\end{theorem}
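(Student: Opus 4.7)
The plan is to extend $L$ across the filtration $X_t := \{x \in X : \rho(x) > t\}$ by decreasing $t$ from $c$ toward $a$. Since $\rho$ is proper to $(a,b)$ and Morse, its critical values accumulate only at the endpoints $a,b$, so inside any $[a+\delta,c]$ there are finitely many; an inductive extension together with a limit argument as $t \to a$ reduces the theorem to two elementary extension steps: across a non-critical interval $[t_0,t_1]$ and across a single critical value $c_p$.

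The key quantitative input is a Morse-index bound. At any critical point $p$ of $\rho$, the pointwise identity
$$
H\rho(v+\bar v,v+\bar v) + H\rho\bigl(i(v-\bar v),i(v-\bar v)\bigr) = 4\,L\rho(v,\bar v), \quad v \in T^{1,0}_p X,
$$
shows that every positive direction of the Levi form produces at least one positive direction of the real Hessian. Because the Levi form has at least three positive eigenvalues everywhere, the Morse index of $\rho$ at $p$ is at most $2n-3$ where $n = \dim_\C X$. Consequently, as $t$ decreases past a critical value the growing set $X_t$ acquires a handle of real dimension at least three, so the relative cohomology $H^k(X_{t_0},X_{t_1};\Z)$ vanishes for $k\le 2$ whenever $t_0<t_1$.

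For a non-critical interval $[t_0,t_1]$ the shell $\{t_0<\rho<t_1\}$ is smoothly a product via the gradient flow of $\rho$, and $L$ extends from $X_{t_1}$ to $X_{t_0}$ by a standard cocycle construction --- the obstruction sits in an $H^1$ of a coherent sheaf on the shell, which vanishes because the Levi form retains two positive eigenvalues when restricted to level sets. At a critical value $c_p$ one works locally in a Stein neighborhood $V \Subset X_{c_p-\epsilon}$ of each critical point $p$; one extends $L|_{V\cap X_{c_p+\epsilon}}$ to $V$ and glues the local extension to the already-extended bundle on $X_{c_p+\epsilon}$. The local extension amounts to trivializing $L$ on $V\cap X_{c_p+\epsilon}$, which has the homotopy type of $S^{2n-k-1}$ where $k$ is the Morse index --- for $k\le 2n-4$ this sphere has $H^2=0$, so the bundle is topologically trivial there and the extension is immediate.

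The main obstacle is the extremal $k=2n-3$ case (a $3$-handle), in which the attaching sphere is an $S^2$ and the restriction $L|_{S^2}$ could a priori have a nontrivial $c_1 \in H^2(S^2,\Z)=\Z$. The argument handles this by exploiting that the attaching sphere lies inside a $3$-complex-dimensional slice along which $\rho$ is plurisubharmonic in the three positive Levi directions; one invokes an Andreotti--Grauert-type vanishing on a Stein thickening of this slice to force the Chern class of $L|_{S^2}$ to come from a global class on $X_{c_p-\epsilon}$, hence to be killed after passing to the extended bundle. Once this topological obstruction is removed, the holomorphic extension is produced by smoothing the transition cocycle and solving a $\bar\partial$-equation whose solvability is again supplied by the $3$-convexity of $\rho$. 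The $[a,b)$ version is handled by the same argument starting from an arbitrarily small $c > a$.
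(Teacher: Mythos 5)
Your Morse-theoretic setup is sound and matches the paper's strategy in outline: the index bound $k\le 2n-3$ from three positive Levi eigenvalues, the reduction to non-critical shells plus local extension at critical points, and the identification of the extremal index $k=2n-3$ (an attaching sphere $S^2$ with possibly nontrivial $c_1$) as the only real difficulty are all correct. The gap is in how you dispose of that extremal case. Your proposed mechanism --- ``invoke an Andreotti--Grauert-type vanishing on a Stein thickening of the slice to force $c_1(L|_{S^2})$ to come from a global class on $X_{c_p-\epsilon}$, hence to be killed after passing to the extended bundle'' --- is not an argument. Andreotti--Grauert vanishing controls coherent analytic cohomology, not the integral class in $H^2(S^2,\mathbb Z)$; nontrivial \emph{topological} line bundles genuinely exist on the local set $\{\rho>c_p\}\cap U_p$ in this case, so no vanishing theorem for $(0,1)$-forms can remove the obstruction, and the phrase ``killed after passing to the extended bundle'' is circular since the extension is exactly what is in question. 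Your own cohomology computation shows the problem: attaching a $3$-handle makes $H^3(X_{t_0},X_{t_1};\mathbb Z)\neq 0$, so restriction $H^2(X_{t_0};\mathbb Z)\to H^2(X_{t_1};\mathbb Z)$ need not be surjective, and the triviality of $L$ near $p$ must be an \emph{analytic} phenomenon, not a topological one.

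The paper's resolution of this case is an essentially different and genuinely needed idea, absent from your proposal. After putting $\rho$ into a nice normal form and deforming it, one reduces to extending a line bundle across a \emph{real analytic totally real $3$-manifold} $\tilde M=\{\operatorname{Re}z_1=\operatorname{Re}z_2=\operatorname{Re}z_3=0\}$ in $\mathbb C^3$ (Proposition \ref{extension}). There the complement is covered by two star-shaped sets whose intersection retracts onto a punctured disk, so the bundle is a single transition function $f$ whose triviality is detected by a winding number; if the winding is nonzero, one restricts $f$ to the complex slice $\{z_1=0\}$, extends it across the totally real set by Hartogs, and the argument principle forces zeros of the extension, which would have to lie on $\tilde M$ --- impossible. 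Real analyticity of the level set is used to straighten it biholomorphically, and the vanishing theorems enter only afterwards, to upgrade topological triviality to holomorphic triviality. Without some argument of this kind (or a substitute for it), your proof does not close.
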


The smoothing techniques by the first author and Diederich \cite{DiederichFornaess} 
imply then the following: 
\begin{theorem}
Let $U$ be a complex manifold of dimension $n\geq 3$. Assume $U$ is $q$-complete with corners with a corresponding exhaustion function $\rho.$ Let $\mathcal N$ be a holomorphic line bundle defined in $\{\rho>c\}.$ Assume $q \geq  \frac{2n+3}{3}.$
Then the line bundle $\mathcal N$ can be uniquely holomorphically extended to $U$.
\end{theorem}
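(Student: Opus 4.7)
The plan is to reduce this theorem to the preceding Morse-function extension theorem via the smoothing technique of Diederich--Fornæss~\cite{DiederichFornaess}. That construction replaces a strictly $q$-convex-with-corners function on an $n$-dimensional manifold by a $C^\infty$ strictly $\tilde q$-convex function, with $\tilde q$ determined quantitatively by $q$ and $n$ (in this paper's convention, $\tilde q = \lfloor n/(n-q+1) \rfloor$). A direct computation shows $\lfloor n/(n-q+1) \rfloor \geq 3$ precisely when $3q \geq 2n + 3$, so the hypothesis $q \geq (2n+3)/3$ is exactly the threshold at which the smoothed function is guaranteed to be strictly $3$-convex, which is the input needed by the Morse extension theorem above.

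Concretely, I would first apply the Diederich--Fornæss regularization to $\rho$ on a set of the form $\{\rho > c + \delta\}$ for some small $\delta > 0$, patched below with the original $\rho$ so as to remain a proper exhaustion of $U$. This yields a smooth exhaustion $\tilde\rho : U \to \mathbb R$ that is strictly $3$-convex outside a compact subset of $\{\rho > c\}$. A small generic $C^\infty$-perturbation supported where $\tilde\rho$ is already $3$-convex produces a smooth proper Morse exhaustion $\rho_0$ with the same three-positive-eigenvalue property, arranged so that $\{\rho_0 > c'\} \subset \{\rho > c\}$ for some $c' \in \mathbb R$. Hence $\mathcal N$ is defined on $\{\rho_0 > c'\}$, and the Morse extension theorem (using its half-open-interval variant to accommodate the minimum of $\rho_0$) yields a holomorphic line bundle $\wt{\mathcal N}$ on $U$ whose restriction to $\{\rho_0 > c'\}$ is $\mathcal N$. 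Since $\{\rho_0 > c'\}$ is a nonempty open subset of $\{\rho > c\}$, the bundle $\wt{\mathcal N}$ is a holomorphic extension of $\mathcal N$ to $U$.

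For uniqueness, if $\wt{\mathcal N}_1$ and $\wt{\mathcal N}_2$ are two extensions, then $\wt{\mathcal N}_1 \otimes \wt{\mathcal N}_2^{-1}$ is trivial on $\{\rho > c\}$ and therefore on $\{\rho_0 > c'\}$; the handle-by-handle construction in the proof of the Morse theorem produces only the trivial bundle as an extension of the trivial bundle, forcing $\wt{\mathcal N}_1 \cong \wt{\mathcal N}_2$. The main obstacle in carrying out this plan is the eigenvalue bookkeeping in the Diederich--Fornæss smoothing: one must track through the finite sequence of max-and-mollify operations how many positive Levi eigenvalues survive, and verify the sharp count that corresponds to the inequality $q \geq (2n+3)/3$. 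A secondary technical point is the patching of the smoothed function near infinity with the original $\rho$ on $\{\rho \leq c\}$, which is routine via standard cutoff and interpolation arguments since both functions are proper exhaustions.
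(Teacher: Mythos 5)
Your proposal follows the paper's own argument: apply the Diederich--Forn\ae ss smoothing to replace the $q$-convex-with-corners exhaustion by a smooth strictly $\tilde q$-convex one with $\tilde q=\left[\frac{n}{n-q+1}\right]$, check that $\tilde q\geq 3$ exactly when $q\geq\frac{2n+3}{3}$, and then invoke the smooth extension theorem for strictly $3$-convex Morse exhaustions together with the uniqueness lemma. The extra plumbing you supply --- arranging $\{\rho_0>c'\}\subset\{\rho>c\}$ and perturbing to a Morse function, while deferring the eigenvalue count in the smoothing to the cited reference --- is precisely what the paper leaves implicit, so the two proofs coincide.
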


 Our interest in the problem of extending a holomorphic line bundle has its origin in a paper by T. Ohsawa [11]
on the nonexistence of real analytic Levi flat hypersurfaces in $\mathbb P^2$, although 
he considered the extension of \emph{roots} of globally defined bundles.
 If it were possible to extend roots of line bundles in dimension $2$, a strategy by T. Ohsawa would give the nonexistence
of real analytic Levi flat hypersurfaces in $\mathbb P^2.$

Indeed if $M$ is a real analytic Levi flat hypersurface in $\mathbb P^2$, then the foliation on $M$ extends to a foliation in a neighborhood of $M$ and by pseudoconvexity of the complement we will obtain a foliation $\mathcal F$ of $\mathbb P^2.$ If $\mathcal N$ denotes the normal bundle of the foliation it is easy to show that it has roots $\mathcal N_k$ for all $k \in \mathcal N$ in a given neighborhood of $M.$ If it were possible to extend them to $\mathbb P^2$, the Chern class of $\mathcal N$ would be zero, a contradiction.

After the above results were obtained, S. Ivashkovich in an arXiv preprint   \cite{Ivashkovitch}
has obtained Hartogs' extension of roots of a line bundle in dimension $n\geq 3$,  assuming that $\rho$ is a strictly plurisubharmonic exhaustion function.

In the last section we give an application of the above results to the 
non-existence of certain Levi-flat manifolds in open subset of projective spaces. 
There are however C.R.-manifolds in $\mathbb P^n$ which are laminated by complex manifolds. Several examples are discussed in Ghys [7] and in Forn\ae ss-Sibony-Wold [5].

The following question seems open. Let $\mathcal F$ be a holomorphic foliation by Riemann surfaces in $\mathbb P^n.$ It always has singularities. Is there a closed nonempty $\mathcal F$ invariant set which does not intersect the singularities of $\mathcal F?$ Equivalently,  does every leaf cluster on ${\mbox{Sing}}(\mathcal F)$? The question is discussed for $n=2$ in Camacho-Lins Neto-Sad [3]. But the answer seems unknown even for all $n\geq 3.$

\medskip

\emph{ACKNOWLEDGEMENT:} We would like to thank the referee for many useful comments.

\section{Extension of line bundles across C.R. manifolds.}

We recall some basics about line bundles.
A topological (resp. holomorphic) line bundle $L\rightarrow X$ over 
a complex manifold $X$ is an element of $H^1(X,\mathcal C^*)$ (resp. $H^1(X,\mathcal O^*)$), 
\emph{i.e.}, $L$ is represented by a cover $\{U_j\}$ of $X$ and 
functions $f_{ij}\in\mathcal C^*(U_i\cap U_j)$ (resp. $f_{ij}\in\mathcal O^*(U_i\cap U_j)$), 
satisfying the cocycle condition $f_{ij}\cdot f_{jk}\cdot f_{ki}=1$. 
The bundle $L$ is trivial if there exist $f_i\in\mathcal C^*(U_i)$ (resp. $f_i\in\mathcal O^*(U_i)$)
such that $f_{ij}=f_j/f_i$.  A way to show that $L$ is trivial is to 
choose branches $g_{ij}=\log f_{ij}$ such that the cocycle 
condition $g_{ij} + g_{jk} + g_{ki} = 0$ is satisfied, solve $g_{ij} = g_j - g_i$ (which is
always possible in the topological category), 
and define $f_i:=e^{g_i}$.  Of course, branches of logarithms can be chosen 
in accordance with the cocycle condition only if the bundle is trivial. Recall that
the short exact sequence  
$$
0\rightarrow \mathbb Z\rightarrow\mathcal C\overset{\mathrm{exp}}{\rightarrow}\mathcal C^*\rightarrow 0
$$
induces an isomorphism $H^1(X,\mathcal C^*)\approx H^2(X,\mathbb Z)$.

If $X$ deformation retracts to a closed subset $X'\subset X$ with 
$L|_{X'}$ trivial and $L$ is a topological line bundle, then $L$ is trivial, since branches of logarithms 
may then be chosen on $X'$.  

If $\{U_j\}$ is a cover of 
$X$ with all line bundles trivial over $U_j$, then any line bundle 
has a representation with respect to this cover.    
Assuming that a holomorphic bundle $L$ is topologically 
trivial, we may split $f_{ij}=f_j/f_i$ with smooth $f_j$'s.  Define 
$g_j:=\log f_j$ (choose any branch).  Then $g_j-g_i$ is holomorphic 
on $U_i\cap U_j$, hence $\omega:=\overline\partial g_j$ 
is a well defined $(0,1)$-form.  So if we can solve $\overline\partial$
at the level of $(0,1)$-forms, we solve $\overline\partial f=\omega$, 
and define $\tilde g_j = g_j - f$ and note that $f_j:=e^{\tilde g_j}$
defines a holomorphic splitting of the 1-cocycle, \emph{i.e.}, $L$ 
is holomorphically trivial.

Our approach to proving Hartogs' extension for line bundles on a 
3-convex $n$-dimensional manifold $X$, will be to choose a suitable 3-convex exhaustion 
$\rho$ of $X$ with Morse critical points, start with a line bundle $L\rightarrow\{\rho>c\}$,  and 
extend locally across the level sets of $\rho$ until we cover $X$.  
This basically reduces to showing that $L$ is locally trivial near 
any point $p\in\{\rho=c\}$.  In all cases except for the case where $p$   
is a critical point with index $2n-3$, we observe that any 
\emph{topological} line bundle is 
trivial near $p$. Then we use the fact, see Theorem 2.3  \cite{Andreotti-Grauert} below,  that we can solve the $\overline\partial$-equation
at the level of $(0,1)$-forms near $p$.
Near a critical point with index $2n-3$, non-trivial topological bundles do exist.
So in this case the triviality is an analytic phenomenon.  The 
key difficulty is to prove Hartogs' extension phenomenon for line bundles 
across real analytic totally real manifolds of dimension three, and so we state this 
as a separate result.

\begin{theorem}\label{main}
Let $X$ be a complex manifold of dimension at least three and let $M\subset X$ be a closed 
real analytic CR submanifold of CR-dimension less than or equal to $n-3$.
Then the restriction map $H^1(X,\mathcal O^*)\rightarrow H^1(X\setminus M,\mathcal O^*)$ is surjective.  
\end{theorem}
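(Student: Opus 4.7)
The plan is to reduce the statement to a local extension problem near points of $M$ and to handle the local problem by a combination of topological and analytic arguments, with the real-analytic structure of $M$ and the CR-dimension hypothesis playing the essential role.

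By a standard \v{C}ech cohomology argument applied to the cover $\{X\setminus M\}\cup\{U_p\}_{p\in M}$, the extension problem reduces to the local one: show that every point $p\in M$ has a neighborhood $U_p$ in $X$ such that $L|_{U_p\setminus M}$ extends to a holomorphic line bundle on $U_p$. Uniqueness of the local extensions up to isomorphism, which is needed for the gluing, follows from a Hartogs-type statement for line bundles across thin sets together with the fact that two extensions agreeing on $U_p\setminus M$ differ by a line bundle trivial off $M$.

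For the local problem near $p$, I would use the real analyticity of $M$ to introduce holomorphic coordinates in which $M$ is a standard real-analytic CR submanifold of $\mathbb C^n$ and in which its intrinsic complexification $\widetilde M$ appears as a complex-analytic germ of complex dimension $\dim_{\mathbb R}M-\mathrm{CRdim}(M)$. The analysis is then organized by the exponential sequence $0\to\mathbb Z\to\cO\to\cO^*\to 0$ on $U_p\setminus M$: there is a topological obstruction in $H^2(U_p\setminus M,\mathbb Z)$ (the first Chern class) and an analytic obstruction in $H^1(U_p\setminus M,\cO)$. In every case except the borderline one described below, a tubular neighborhood of $M$ retracts to a linking sphere $S^{2n-d-1}$ with $d=\dim_{\mathbb R}M<2n-3$, so $H^2(U_p\setminus M,\mathbb Z)=0$ and every holomorphic line bundle is automatically topologically trivial. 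Writing the cocycle as a smooth coboundary and taking logarithms, the analytic obstruction then becomes a $\overline\partial$-closed $(0,1)$-form on $U_p\setminus M$, and killing it amounts to solving $\overline\partial f=\omega$, which is possible via the $\overline\partial$-solvability in Theorem~2.3 together with the $q$-convexity properties supplied by the hypothesis $\mathrm{CRdim}(M)\le n-3$.

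The main obstacle will be the borderline configuration in which $M$ has real dimension exactly $2n-3$ with CR-dimension $n-3$; the model case is a real-analytic totally real $3$-manifold in a complex $3$-manifold. There $U_p\setminus M$ deformation-retracts to the linking $2$-sphere, so $H^2(U_p\setminus M,\mathbb Z)\cong\mathbb Z$ and topologically nontrivial line bundles do exist on the complement; the theorem nevertheless asserts that no holomorphic line bundle realizes such a class. I would handle this case by slicing $U_p$ with a holomorphic family of complex discs transverse to $M$: on each slice the intersection with $M$ is a real-analytic arc cutting the disc into two simply connected pieces on each of which the restricted bundle is trivial, so fibrewise trivializations exist. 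Assembling these into a single global trivialization reduces to solving an associated $\overline\partial$-problem in the remaining complex directions, which is precisely the role of the ``third'' complex direction furnished by $\mathrm{CRdim}(M)\le n-3$. This ``analytic phenomenon'', noted in the introduction as the heart of the theorem, is what distinguishes the statement from a purely topological extension result.
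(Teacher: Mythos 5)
Your first three paragraphs track the paper's proof closely: the reduction to local triviality of $L$ near points of $M$ (with the transition functions then extending across $M$ by Hartogs and the identity theorem), the use of real analyticity to put $M$ in normal form, the dichotomy between the cases where the linking sphere has vanishing $H^2$ (so topological triviality plus the Andreotti--Grauert $\overline\partial$-solvability of Theorem~2.3 finishes the job) and the borderline case of a real analytic totally real $3$-manifold in a complex $3$-fold. Up to that point the proposal is essentially the paper's argument.

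The gap is in your treatment of the borderline case, which is the heart of the theorem. First, the geometry of your slicing is off: a complex disc and a totally real $3$-manifold in $\mathbb C^3$ have real dimensions $2+3<6$, so a generic transverse disc misses $M$ entirely; only a thin family of discs meets $M$ in arcs, so there is no ``holomorphic family of discs'' each cut by $M$ into two pieces. Second, and more seriously, the proposed mechanism cannot succeed: fibrewise trivializations assembled by solving a $\overline\partial$-problem can only remove the obstruction living in $H^1(U_p\setminus M,\mathcal O)$; they are powerless against a nonzero class in $H^2(U_p\setminus M,\mathbb Z)\cong\mathbb Z$, which you correctly identify as present. What must be proved is that a \emph{holomorphic} line bundle on $\mathbb B^3\setminus\tilde M$ is automatically \emph{topologically} trivial near the origin, and no step of your sketch produces that. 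The paper's Proposition~2.4 does it as follows: trivialize $L$ on each piece of a two-element star-shaped cover $U_1,U_2$ (Lemma~2.6, via Theorem~2.3), so $L$ is given by a single holomorphic transition function $f$ on $U_1\cap U_2$ whose topological class is the winding number of $f$ on a circle linking $\tilde M$ (Lemma~2.5); restrict $f$ to the slice $\{z_1=0\}$, extend it across the totally real $2$-manifold $\{z_1=0\}\cap\tilde M$ by the Hartogs phenomenon, and note that a nonzero winding number forces the extension to vanish somewhere, necessarily on $\tilde M$ --- impossible, since the zero set of a holomorphic function is a complex hypersurface and cannot lie in a totally real set. Nothing in your proposal plays the role of this argument. (Your closing remark that the ``third complex direction'' is ``furnished by $\mathrm{CRdim}(M)\le n-3$'' is also confused: in the borderline case the CR-dimension is $0$ and all three complex directions are already consumed by the totally real directions of $M$.)
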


\begin{remark}
It follows from the previous theorem that an analytic set M, of codimension 
at least three, is removable for holomorphic line bundles. We can first extend through the smooth points of M and then use the stratification of M into smooth manifolds.
\end{remark}

\begin{remark}
Ivashkovich \cite{Ivashkovitch} has given a counter example to extension for 
a two dimensional real analytic totally real manifold in $\mathbb C^2$, even under the assumption that 
the bundle is topologically trivial.  We give other examples at the end of this section and in Section 5.
\end{remark}

We rely on the following result by Andreotti-Grauert [1, Proposition 12]. We have translated their result using our definition of $q$-convexity. 

\begin{theorem}\label{AG}
Let $V$ be a domain in $\mathbb C^n, n \geq 3$  and let $p\in V$. Let $\rho$ be a smooth strictly q-convex function on V, with $q \geq 3$. There is a basis $Q$ of Stein neighborhoods of p such that  on the intersection $D$ of $Q $ with $\{\rho>c=\rho(p)\},$ the $\overline{\partial}$-equation is solvable for $(0,1)$-forms. 
More precisely, for any $\overline{\partial}$-closed smooth $(0,1)$ form $f$  in $D$ there is a smooth function $u$ in $D$ such that $\overline{\partial}u=f.$
\end{theorem}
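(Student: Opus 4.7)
This is a local $\overline{\partial}$-solvability result in the Andreotti-Grauert framework. The strategy I would follow is to exhibit a fundamental system of Stein neighborhoods $Q$ of $p$ for which $D := Q \cap \{\rho > c\}$ is itself Stein; then $\overline{\partial}$-solvability for $(0,1)$-forms follows from Cartan's Theorem B or H\"ormander's $L^2$ method.

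First I would put $\rho$ into a local normal form near $p$. Taylor expanding to second order and absorbing the pluriharmonic part of the quadratic Taylor polynomial into a holomorphic change of coordinates (the classical Levi-polynomial trick, available because pluriharmonic functions contribute nothing to the Levi form), one can arrange
\begin{equation*}
\rho(z) = c + \sum_{j=1}^{n} \lambda_j |z_j|^2 + O(|z|^3), \qquad \lambda_j \in \R,
\end{equation*}
with at least $q \geq 3$ of the eigenvalues $\lambda_j$ strictly positive. I would then take $Q$ to be a polydisk centered at $p$ whose polyradius is small enough that the quadratic part of $\rho$ dominates the cubic error on $Q$; such a polydisk is Stein, and as the polyradius shrinks these form the claimed basis.

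The main step is to construct a strictly plurisubharmonic exhaustion of $D$. The plan is to combine a strictly psh exhaustion $\psi$ of the Stein polydisk $Q$ (which handles the boundary piece on $\partial Q$) with a barrier against the concave boundary piece $\{\rho = c\} \cap Q$. A first candidate is
\begin{equation*}
\phi_0(z) = -\log(\rho(z) - c),
\end{equation*}
whose Levi form is $\partial\rho \wedge \overline\partial\rho/(\rho-c)^2 \; - \; \partial\overline\partial\rho/(\rho-c)$. The first summand is positive semi-definite, and the negative part of the second is concentrated in the at most $n - q \leq n - 3$ non-positive eigendirections of $\partial\overline\partial\rho$. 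By modifying $\phi_0$ via composition with an appropriate convex increasing function of $\rho - c$ and adding a large multiple of $\psi$, one arranges that the positive contributions uniformly dominate the negative ones on $D$. The result is a strictly plurisubharmonic exhaustion $\phi$ of $D$, so $D$ is Stein.

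With $D$ Stein, $\overline{\partial} u = f$ is solvable for every $\overline\partial$-closed smooth $(0,1)$-form $f$ by classical theory. The main obstacle is the previous step: the construction of a genuinely plurisubharmonic barrier against $\{\rho = c\}$ requires quantitative control ensuring that the blow-up $-\partial\overline\partial\rho/(\rho-c)$ can be absorbed along its at most $n-q$ negative directions. This is the essence of Andreotti-Grauert [1, Proposition 12], and the hypothesis $q \geq 3$ is precisely what furnishes the positivity slack needed to close the estimates uniformly as $\rho \to c$.
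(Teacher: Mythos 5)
Your approach cannot work, because the domain $D=Q\cap\{\rho>c\}$ is not Stein. Along the boundary piece $\{\rho=c\}$ a defining function for $D$ is $r=c-\rho$, whose Levi form $-\partial\overline\partial\rho$ has at least $q$ negative eigenvalues; restricted to the complex tangent space of the boundary it still has at least $q-1\geq 2$ negative eigenvalues, so $D$ is strictly pseudo\emph{concave} there. By the classical E.~E.~Levi/Hartogs phenomenon (which the paper itself invokes, e.g.\ via Theorem 13.8 of Henkin--Leiterer, to extend the transition functions across $M$), every holomorphic function on $D$ continues holomorphically past such a boundary point; hence $D$ is not a domain of holomorphy and in particular not Stein. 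The theorem is a genuinely cohomological statement about concave boundaries with many positive Levi eigenvalues --- precisely Andreotti--Grauert's Proposition 12, which the paper quotes rather than proves --- and it is not reducible to exhibiting a Stein neighborhood basis of $D$ itself. Note also that the symmetry of the situation is visible in what fails: on such a $D$ one can solve $\overline\partial$ only in degrees $s\leq q-2$, whereas a Stein $D$ would give all degrees.

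The error is localized in your Levi form computation for $\phi_0=-\log(\rho-c)$. The identity $\partial\overline\partial\phi_0=\partial\rho\wedge\overline\partial\rho/(\rho-c)^2-\partial\overline\partial\rho/(\rho-c)$ is correct, but on $D$ one has $\rho-c>0$, so the term $-\partial\overline\partial\rho/(\rho-c)$ is \emph{negative} exactly in the ($\geq q\geq 3$) directions where $\partial\overline\partial\rho$ is positive, not, as you write, in the at most $n-q$ non-positive eigendirections. Moreover this negativity blows up like $1/(\rho-c)$ as one approaches $\{\rho=c\}$, while the positive rank-one term $\partial\rho\wedge\overline\partial\rho/(\rho-c)^2$ can compensate in at most one complex direction and a bounded multiple of a strictly plurisubharmonic $\psi$ on $Q$ cannot absorb an unbounded negative contribution. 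No convex reparametrization of $\rho-c$ repairs this: any exhaustion of $D$ that tends to $+\infty$ at $\{\rho=c\}$ must have at least $q-1$ negative Levi eigenvalues near that boundary. A correct proof must instead follow the Andreotti--Grauert route (the bump method with domains of mixed Levi signature, or integral-kernel/$L^2$ arguments as in Henkin--Leiterer), where the hypothesis $q\geq 3$ enters as the condition $1\leq q-2$ on the degree of the form, not as slack in a plurisubharmonicity estimate.
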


\medskip

To prove Theorem \ref{main} it is enough to show the following: for any point $x\in M$ there exists 
an open neighborhood $U_x$ of $x$ in $X$ such that any holomorphic
line bundle on $U_x\cap (X\setminus M)$ is holomorphically trivial. 
In that case, given a line bundle $L\rightarrow X\setminus M$, we may 
find an open cover $\{U_\alpha\}_{\alpha\in I}$ of $X$ such that $L$
is trivial over $U_\alpha\setminus M$ for all $\alpha$.  
Then $L$ is represented by a cocycle $f_{\alpha,\beta}\in\mathcal O^*(U_{\alpha,\beta}\setminus M)$.  But any holomorphic function in $U_\alpha\setminus M$ extends holomorphically across $M$, 
and so by the identity theorem $f_{\alpha,\beta}$ extends to a cocycle 
with respect to the cover $\{U_\alpha\}_{\alpha\in I}$ of $X$.   \

At any point of $M$ we may choose local holomorphic coordinates $(z,w)\in\mathbb C^r\times\mathbb C^s$ on $X$, $z_j=x_j+iy_j,$ 
such that $M=\{z_j=F_j,j=1,\cdots,k, x_j=G_j, j=1,\cdots,r\}$
where the $F_j,G_j$ are CR functions of $y_{k+1},\cdots, y_r, w_1,\cdots w_s$, holomorphic in $w$ and, vanishing to second order. The CR-dimension of $M$ is $s$, and the real codimension $r\geq 3$. The function $\sum_j |z_j-F_j|^2+\sum_j |x_j-G_j|^2$ is $q$-convex. \

In the case $r\geq 4$ or if $r=3$ and $k>0$, the proof is short: near any point $x\in M$
we have that $L$ is topologically trivial; $H^2((X\setminus M)\cap U_p,\mathbb Z)$ is zero because $X\setminus M$ retracts on a ball $B^*$ of dimension $>3$.  By Theorem \ref{AG} we may also 
solve $\overline\partial$ near $x$, and so $L$ is holomorphically trivial.  \

The main difficulty is to prove the result if $s=0$, $r=3$, and $k=0$, \emph{i.e.}, if
$M$ is a three dimensional totally real manifold in complex three dimensional space.
If $s$ were greater than zero then $\mathbb C^{r+s}\setminus M$ would (locally)
retract onto $\Omega:=\mathbb C^3\setminus\{M\cap\{z_{r+1}=\cdot\cdot\cdot=z_{r+s}=0\}\}$.
A line bundle on $\mathbb C^{r+s}\setminus M$ restricted to $\Omega$, is locally topologically 
trivial by the extension result for $s=0$, hence it is locally topologically trivial on $\mathbb C^{r+s}\setminus M$.

So we need to prove Theorem \ref{main} in the case where 
$dim(X)=3$, and $M$ is a totally real three dimensional manifold.  
Let $x\in M$.  Since $M$ is real analytic there exists an open neighborhood $U_x$ of $x$ in $X$ and 
a biholomorphism $\psi:U_x\rightarrow V_0\subset\mathbb C^3_0$ such that 
$
\psi(M)=\tilde M=\{z\in V_0: Re(z_j)=0 \mbox{ for } j=1,...,3\}.
$
So to prove Theorem \ref{main}
it suffices to prove the following: 

\begin{proposition}\label{extension}
Let $L\rightarrow V_0\setminus\tilde M$ be a holomorphic line bundle.  Then there exists an 
$\epsilon>0$ such that $L$ is holomorphically trivial over $(V_0\setminus\tilde M)\cap\mathbb B^3_{\epsilon}$.
\end{proposition}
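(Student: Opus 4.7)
The plan is to reduce to a topological triviality statement via Andreotti-Grauert and then attack it by extending $L$ holomorphically across $\tilde M$. Observe first that $\rho(z) := \mathrm{Re}(z_1)^2 + \mathrm{Re}(z_2)^2 + \mathrm{Re}(z_3)^2$ has complex Hessian $\tfrac{1}{2} I$ at every point of $V_0$, so $\rho$ is strictly $3$-convex with $\tilde M = \{\rho = 0\}$. By Theorem \ref{AG}, there is a basis of Stein neighborhoods $Q$ of $0$ on which $\overline\partial$ is solvable for $(0,1)$-forms on $D := Q \cap \{\rho > 0\} = Q \setminus \tilde M$. Fix such a $Q \subset V_0$ and pick $\epsilon > 0$ with $\mathbb B^3_\epsilon \subset Q$; it then suffices to holomorphically trivialize $L|_D$, and restriction will give the conclusion of the proposition.

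With $\overline\partial$-solvability on $D$ in hand, the splitting-then-logarithm-then-$\overline\partial$ argument reviewed at the beginning of this section shows that any \emph{topologically trivial} holomorphic line bundle on $D$ is holomorphically trivial. So the task reduces to showing $c_1(L|_D) = 0$ in $H^2(D, \mathbb Z) \cong \mathbb Z$. Since $D$ deformation retracts onto the $2$-sphere $\{x \in \mathbb R^3 : |x| = \delta,\, y = 0\}$ linking $\tilde M$, topologically nontrivial bundles genuinely exist on $D$, so this vanishing is analytic in nature and is the main obstacle.

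To produce the needed vanishing, I would construct a holomorphic line bundle $\tilde L$ on $Q$ restricting to $L$ on $D$: then $\tilde L$ is holomorphically trivial on the contractible Stein set $Q$, forcing $c_1(L) = 0$. Extension of transition functions is the easy half: any holomorphic function on $U \cap D$ (with $U \subset Q$ open) extends across $\tilde M \cap U$ by the Hartogs/Riemann removable singularity theorem, since $\tilde M$ has real codimension three; moreover a nowhere-vanishing function remains nowhere vanishing after extension, because its zero locus would be a complex hypersurface of real codimension two and thus could not be contained in the totally real three-dimensional set $\tilde M$. The cocycle condition on extensions then follows by the identity theorem on each connected overlap.

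The hard half, which I expect to be the crux of the proof, is producing for each $p \in \tilde M \cap Q$ a Stein neighborhood $U_p \subset Q$ together with a nowhere-vanishing holomorphic section of $L$ on $U_p \cap D$, i.e., local holomorphic triviality of $L$ across $\tilde M$; once these are in place the extended cocycles assemble to $\tilde L$. Producing such local sections is exactly where the complex-analytic structure of $\tilde M = i\mathbb R^3 \subset \mathbb C^3$ as a maximally totally real submanifold must be used decisively, rather than the merely topological features of $D$ that allow nontrivial bundles. My plan would be to attempt this via a coherence/direct-image argument for $j_\ast L$ along $j : D \hookrightarrow Q$, combined with the $\overline\partial$-solvability on $D$ and an exhaustion/approximation argument along level sets of $\rho$ near $\tilde M$.
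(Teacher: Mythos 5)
Your reduction to topological triviality via Theorem \ref{AG} matches the paper's first step, and your diagnosis that $H^2(D,\mathbb Z)\cong\mathbb Z$ is nonzero, so that the vanishing of $c_1(L|_D)$ must come from the complex analysis rather than the topology, is exactly right. But the route you propose for supplying that analytic input is circular. You want to build an extension $\tilde L$ of $L$ over $Q$ by producing, for each $p\in\tilde M\cap Q$, a neighborhood $U_p$ and a nowhere-vanishing holomorphic section of $L$ on $U_p\cap D$, i.e.\ local holomorphic triviality of $L$ near points of $\tilde M$. For $p=0$ this is word for word the statement of Proposition \ref{extension}: you have restated the problem, not solved it. The tools you name do not close the loop: scalar Hartogs extension gives $j_*\mathcal O_D=\mathcal O_Q$, but to know that $j_*L$ is invertible (locally free of rank one) near $\tilde M$ you need precisely the local triviality you are trying to prove, and the $\overline\partial$-solvability on $D$ only converts topological triviality into holomorphic triviality --- it says nothing about why the topological obstruction vanishes. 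Your ``easy half'' (extending and matching the transition cocycle once local trivializations near $\tilde M$ exist) is fine, but it never gets off the ground without the hard half.

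What is missing is the paper's slicing argument. Using the two-set cover $U_1,U_2$ of $\mathbb B^3\setminus\tilde M$ (each $U_j$ star-shaped, so $L$ is holomorphically trivial on $U_j\cap\mathbb B^3_\epsilon$ by Theorem \ref{AG}), the bundle is encoded by a single transition function $f\in\mathcal O^*((U_1\cap U_2)\cap\mathbb B^3_\epsilon)$, and topological nontriviality is equivalent to $f$ having nonzero winding number on a small circle $S^1\subset\{z_1=0,\ y=0\}$ linking $\tilde M$. Now restrict $f$ to the complex $2$-plane $\{z_1=0\}$: there $\tilde M$ cuts out only a totally real $2$-dimensional manifold, across which the \emph{scalar} function $f$ extends holomorphically. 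The nonzero winding number forces the extension to vanish somewhere inside $S^1$ by the argument principle, yet its zero set is a complex hypersurface of $\{z_1=0\}\cong\mathbb C^2$ and so cannot be contained in the totally real set $\tilde M\cap\{z_1=0\}$ --- a contradiction. This scalar-level Hartogs-plus-argument-principle step is the actual content of the proposition; without it, or a genuine substitute for it, your proof is incomplete.
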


Let $B\subset\mathbb R^3$ denote the open unit ball, and for $j=1,2$ let $B_j$
denote the ball of radius one centered at $((-1)^{j+1},0,0)$.   Let 
$\tilde U_j:=B\setminus\overline{B_j}$.   Let $D$ denote the disk 
$D:=\{(x_1,...,x_3)\in B:x_1=0\}$, and note that 
 $\tilde U_1\cap\tilde U_2$
retracts onto the punctured disk $D^*$.   Let $S^1$ denote the circle of radius one half 
contained in $D^*$.

For $j=1,2$ let 
$U_j=(\tilde U_j\times i\cdot\mathbb R^3)\cap\mathbb B^3\subset\mathbb C^3$, 
and let $D^*\hookrightarrow U_1\cap U_2$ be the natural inclusion.  
Then $U_1\cap U_2$ retracts onto $D^*$.
Now $(U_1,U_2)$ is an open cover of $\mathbb B^3\setminus\tilde M$.
Since each $U_j$ retracts to a point, any topological line bundle 
$L\rightarrow\mathbb B^3\setminus\tilde M$ is represented by a function $f_L\in\mathcal C^*(U_1\cap U_2)$.

\begin{lemma}\label{winding}
The line bundle $L$ is trivial if and only if the winding number of $f_L$ restricted
to $S^1$ is zero.  
\end{lemma}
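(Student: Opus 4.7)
The plan is to exploit the structure of the cover $\{U_1,U_2\}$. Each $U_j$ deformation retracts to a point, so any topological line bundle on $U_j$ is trivial; fixing such trivializations, $L$ is represented by the single continuous transition function $f_L\in\mathcal C^*(U_1\cap U_2)$, and topological triviality of $L$ becomes equivalent to a splitting $f_L=f_2/f_1$ with $f_j\in\mathcal C^*(U_j)$. Since $U_1\cap U_2$ deformation retracts onto $D^*$, which in turn retracts onto $S^1$, and since $\pi_1(\mathbb C^*)=\mathbb Z$, the homotopy class of $f_L$ as a map into $\mathbb C^*$ is completely captured by the winding number of $f_L|_{S^1}$.

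For the forward implication, assume $L$ is trivial and write $f_L=f_2/f_1$ with $f_j\in\mathcal C^*(U_j)$. Contractibility of each $U_j$ provides continuous branches $g_j:=\log f_j$ on $U_j$, and then $g_2-g_1$ is a continuous logarithm of $f_L$ on $U_1\cap U_2$. Any function admitting a continuous logarithm has winding number zero along every loop, so in particular the winding number on $S^1$ vanishes.

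For the converse, suppose the winding number of $f_L|_{S^1}$ is zero. Then $f_L|_{S^1}$ has a continuous logarithm on $S^1$, and by pulling back along the retraction $U_1\cap U_2\to S^1$ we obtain a continuous logarithm $g\in\mathcal C(U_1\cap U_2)$ of $f_L$. Since the sheaf $\mathcal C$ of continuous functions is fine, a partition of unity subordinate to $\{U_1,U_2\}$ splits $g=h_2-h_1$ with $h_j\in\mathcal C(U_j)$; equivalently, $H^1$ of the cover with values in $\mathcal C$ vanishes. Setting $f_j:=e^{h_j}\in\mathcal C^*(U_j)$ yields $f_L=f_2/f_1$, so $L$ is trivial.

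The step requiring the most care is the purely topological claim that the crescent-shaped sets $\tilde U_j\subset B$ give rise to $U_j$'s that are contractible and whose intersection retracts to $D^*$; this needs to be verified directly from the definitions, after which the rest is the standard $\pi_1(\mathbb C^*)=\mathbb Z$ classification together with a fine-sheaf argument. Conceptually the point is simply that the winding number realizes the isomorphism $H^2(\mathbb B^3\setminus\tilde M,\mathbb Z)\cong H^2(S^2,\mathbb Z)\cong\mathbb Z$ furnished by the exponential sequence, so it is the unique obstruction to triviality.
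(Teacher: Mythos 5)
Your proof is correct and follows essentially the same route as the paper: winding number zero gives a continuous logarithm of $f_L$ via the retraction of $U_1\cap U_2$ onto $S^1$, the logarithm is split additively using the vanishing of $H^1$ with coefficients in the fine sheaf $\mathcal C$, and the converse uses contractibility of each $U_j$ to see that a multiplicative splitting forces $f_L$ to be null-homotopic in $\mathbb C^*$. (The only point stated loosely is that pulling back a logarithm of $f_L|_{S^1}$ along the retraction gives a logarithm of $f_L\circ r$ rather than of $f_L$ itself; one passes to $f_L$ by the homotopy lifting property of $\exp:\mathbb C\to\mathbb C^*$, which is the same implicit step the paper takes.)
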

\begin{proof}
Since $U_1\cap U_2$ retracts onto $D^*$, the function $f_L$ has 
a continuous logarithm $\log f_L$ if the winding number is zero. 
Since $H^1(\mathbb B^3\setminus\tilde M,\mathcal C)=0$
we may write $\log f_L=g_2-g_1$ for $f_j\in\mathcal C(U_j)$.
So $f_j:=e^{g_j}$ defines a multiplicative splitting of $f_L$. 

On the other hand, assume that $f_L=f_2/f_1$ for $f_j\in\mathcal C^*(U_j)$.
Since each $U_j$ is star-shaped it follows that each $f_j$ is homotopic 
through non-zero maps to a constant map, hence $f_L$ is homotopic
through non-zero maps to a constant map.  
\end{proof}

\begin{lemma}\label{trivialcover}
There exists an $\epsilon>0$ such that if $L\rightarrow U_j\cap\mathbb B^3$ is 
a holomorphic line bundle, then $L|_{U_j\cap\mathbb B^3_{\epsilon}}$
is holomorphically trivial.  
\end{lemma}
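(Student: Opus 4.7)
The plan is to identify $U_j\cap\mathbb B^3_\epsilon$ near the origin as the super-level set of a smooth strictly plurisubharmonic function on $\mathbb C^3$, apply Theorem \ref{AG} to solve $\overline\partial$ at the level of $(0,1)$-forms, and then conclude via the topological-plus-analytic trivialization scheme already recalled at the start of Section 2.

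For $j=1,2$, writing $z_k = x_k + i y_k$, the candidate defining function, dictated by the choice of center of $B_j$, is
\[
\rho_j(z) := (x_1-(-1)^{j+1})^2 + x_2^2 + x_3^2 - 1.
\]
A direct computation gives $\partial\overline\partial x_k^2 = \tfrac{1}{2}\, dz_k\wedge d\bar z_k$, so the Levi form of $\rho_j$ on $\mathbb C^3$ equals $\tfrac{1}{2}\sum_{k=1}^{3} dz_k\wedge d\bar z_k$; in particular $\rho_j$ is smooth and strictly plurisubharmonic on all of $\mathbb C^3$, hence strictly $3$-convex in the sense of the paper. By construction $\rho_j(0)=0$, and on a sufficiently small neighborhood of the origin one has $U_j = \{\rho_j>0\}$.

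Applying Theorem \ref{AG} with $p=0$ and $\rho=\rho_j$, I would obtain a Stein neighborhood $Q$ of the origin, which I may shrink to a contractible ball, such that the $\overline\partial$-equation is solvable for smooth $(0,1)$-forms on $D := Q \cap \{\rho_j>0\}$. Choosing $\epsilon>0$ small enough that $\mathbb B^3_\epsilon\subset Q$ and $U_j\cap\mathbb B^3_\epsilon = \{\rho_j>0\}\cap\mathbb B^3_\epsilon$ gives $U_j\cap\mathbb B^3_\epsilon \subset D$. Since $D$ is one side of a smooth real hypersurface through $0$ inside a contractible open set, $D$ is itself contractible, so $H^2(D,\mathbb Z)=0$ and $L|_D$ is topologically trivial.

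It then remains to run the argument from the start of Section 2 verbatim: write the holomorphic cocycle of $L$ over a trivializing cover of $D$ as a smooth coboundary $f_{\alpha\beta} = f_\beta/f_\alpha$, pick compatible branches $g_\alpha = \log f_\alpha$ so that the forms $\overline\partial g_\alpha$ glue to a global smooth $\overline\partial$-closed $(0,1)$-form $\omega$ on $D$, solve $\overline\partial u = \omega$ using Theorem \ref{AG}, and set $h_\alpha := e^{g_\alpha - u}$ to obtain a holomorphic splitting $f_{\alpha\beta} = h_\beta/h_\alpha$ of the cocycle. Restricting this trivialization to $U_j\cap\mathbb B^3_\epsilon \subset D$ yields the conclusion. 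The only substantive new ingredient is the strict plurisubharmonicity of $\rho_j$, which is immediate once the identity $\partial\overline\partial x_k^2 = \tfrac{1}{2}\, dz_k \wedge d\bar z_k$ is in hand; everything beyond that is $\overline\partial$-machinery and cohomological bookkeeping already deployed in Section 2.
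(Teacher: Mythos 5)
Your proof follows the paper's argument essentially verbatim: the paper likewise treats $U_j\cap\mathbb B^3$ as a superlevel set of a strictly plurisubharmonic function (implicitly, your $\rho_j$), invokes Theorem \ref{AG} to solve $\overline\partial$ for $(0,1)$-forms on a set of the form $\Omega_j\cap U_j$, and then runs the topological-plus-$\overline\partial$ splitting scheme from the start of Section 2. The one place where your write-up is weaker than the paper's is the topological step: Theorem \ref{AG} provides a \emph{basis} of Stein neighborhoods $Q$ on which solvability holds, but it does not allow you to replace such a $Q$ by a smaller ball of your choosing while retaining solvability, and ``one side of a smooth hypersurface inside a contractible open set'' need not be contractible for a $Q$ of unspecified shape. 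The paper avoids this by noting that $U_j\cap\mathbb B^3$ itself is star-shaped, so $L$ is already topologically trivial there and the obstruction to holomorphic triviality is a globally defined $\overline\partial$-closed $(0,1)$-form $\omega$ on all of $U_j\cap\mathbb B^3$; one then only needs to solve $\overline\partial u=\omega$ on the possibly irregular set $Q\cap U_j$ and restrict the resulting holomorphic trivialization to $U_j\cap\mathbb B^3_\epsilon$ with $\mathbb B^3_\epsilon\subset Q$. With that small repair your argument is complete and coincides with the paper's.
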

\begin{proof}
By Theorem \ref{AG} there is an 
open neighborhood $\Omega_j\subset\mathbb B^3$ 
of the origin, such that the $\overline\partial$-equation
for $(0,1)$-forms
is solvable on $\Omega_j\cap U_j$.
Since $U_j\cap\mathbb B^3$ is star-shaped, the obstruction to $L$ being 
trivial on $U_j\cap\mathbb B^3$ is represented by a $\overline\partial$-closed $(0,1)$-form 
$\omega$.  Since $\omega$ is $\overline\partial$-exact on $\Omega_j\cap U_j$
we have that $L|_{\Omega_j\cap U_j}$ is holomorphically trivial.  
Choose $\epsilon>0$ small enough such that $\mathbb B^3_{\epsilon}\subset\Omega_j$
for $j=1,2$.
\end{proof}

\begin{lemma}\label{debarcohom}
There exists a neighborhood $U$ of 
the origin such that the following holds: let $\omega$ be a $\overline\partial$-closed (0,1)-form on $\mathbb B^3\setminus\tilde M$.  
Then $\omega$ is $\overline\partial$-exact on $U\setminus\tilde M$.

\end{lemma}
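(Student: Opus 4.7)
The plan is to derive the lemma as a direct corollary of Theorem~\ref{AG}, by exhibiting a natural strictly $3$-convex function on $\mathbb C^3$ whose zero locus is exactly $\tilde M$. Specifically, I would set $\rho(z) := x_1^2 + x_2^2 + x_3^2$ with $z_j = x_j + i y_j$. A direct computation using $x_j = (z_j+\bar z_j)/2$ gives
\[
\partial \bar\partial \rho = \frac{1}{2}\sum_{j=1}^3 dz_j \wedge d\bar z_j,
\]
so $\rho$ is globally strictly plurisubharmonic on $\mathbb C^3$, hence in particular strictly $3$-convex in the sense of the paper. Moreover $\rho \geq 0$ with equality exactly on $\tilde M$, so $\rho(0) = 0$ and $\{\rho > 0\} = \mathbb C^3 \setminus \tilde M$.

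Next I would apply Theorem~\ref{AG} with $V = \mathbb B^3$, $p = 0$, $q = 3$, and $c = \rho(0) = 0$. This yields an arbitrarily small Stein neighborhood $Q$ of the origin such that the $\bar\partial$-equation for smooth $(0,1)$-forms is solvable on $Q \cap \{\rho > 0\} = Q \setminus \tilde M$. I would take $U := Q$ with $Q \Subset \mathbb B^3$. For any $\bar\partial$-closed smooth $(0,1)$-form $\omega$ on $\mathbb B^3 \setminus \tilde M$, the restriction $\omega|_{U \setminus \tilde M}$ is again smooth and $\bar\partial$-closed, so Theorem~\ref{AG} produces a smooth $u$ on $U \setminus \tilde M$ with $\bar\partial u = \omega$ there, which is exactly the claimed exactness.

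The only substantive choice in the argument is the function $\rho$: it must be at least $3$-convex (so that Theorem~\ref{AG} applies with $q = 3$), defined in a neighborhood of $0$, and satisfy $\{\rho \leq 0\} = \tilde M$ locally so that $Q \cap \{\rho > 0\}$ coincides with $Q \setminus \tilde M$. The squared distance to $\tilde M$, namely $x_1^2 + x_2^2 + x_3^2$, satisfies all three conditions and its $3$-convexity is the one-line computation above; so there is no genuine analytic obstacle and the lemma reduces to Theorem~\ref{AG}.
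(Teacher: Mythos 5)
Your proposal is correct and follows exactly the paper's own route: the paper's proof consists precisely of invoking Theorem~\ref{AG} with the function $\rho(z)=\sum_j(\mathrm{Re}\,z_j)^2$, whose zero set is $\tilde M$ and which is strictly plurisubharmonic, hence strictly $3$-convex. You have merely written out the routine verifications that the paper leaves implicit.
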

\begin{proof}
This follows from Theorem \ref{AG} and using the function 
$$
\rho(z):=\sum_j (Re(z_j))^2.
$$

\emph{Proof of Proposition \ref{extension}:} 
In the case $dim(X)=k=3$.

Assume to get a contradiction that 
$L\rightarrow \mathbb B^3\setminus\tilde M$ is a holomorphic line bundle, which is 
not trivial in any neighborhood of the origin.  
From Lemma \ref{debarcohom}  we see that $L$ is not topologically trivial, 
otherwise $L$ would be holomorphically trivial on $\mathbb B^3_{\epsilon_0}\setminus\tilde M$.    By Lemma \ref{trivialcover} 
there exists an $\epsilon>0$ such that  
$L|_{\mathbb B^3_\epsilon\setminus\tilde M}$ is represented by a non-vanishing holomorphic function $f\in\mathcal O^*((U_1\cap U_2)\cap\mathbb B_\epsilon)$.

Since $L$ is topologically nontrivial  we get from Lemma \ref{winding} (and scaling)
that $f$ has a non-zero winding number when restricted to $S^1$.
Restricting $f$ to $\{z_1=0\}\cap (\mathbb B_\epsilon^3\setminus\tilde M)$
we get by Hartogs' extension phenomenon that $f$ extends 
to $\{z_1=0\}\cap\mathbb B^3_\epsilon$.  Then non-trivial winding implies 
that the extended $f$ must have zeroes.  On the other hand these 
zeroes must be contained in $\tilde M$, which is a contradiction.  

\end{proof}

\medskip
\begin{example}
We give an example of a 
topologically trivial line bundles with
roots
 in the complement of a real line in $\mathbb C^2$, which is not
holomorphically trivial:

 Cover the complement of the real z-axis by two open sets:
 $U= \{(z,w); w \neq 0\}\cup \{(x+iy,0); y<0\}$ and $V=\{(z,w); y>0\}.$
 We define the line bundle by using the transition function $e^{1/w}$
 on $U \cap V=\{(z,w); y>0, w\neq 0\}.$ Since we can write $1/w$ as the difference
 between smooth functions on $U$ and $V$, it follows that the line bundle is topologically trivial.
 If the line bundle were holomorphically trivial, then $e^{1/w}=\frac{f_U}{f_V}$. But $f_U$ extends holomorphically to a neighborhood of $0$ so neither $f_U$ nor $f_V$ can be singular at $w=0,$
 a contradiction.
\end{example}

\section{Proof of Theorem 1.7}

Let $\rho$ be a smooth exhaustion of a domain $U$ in $\mathbb P^n.$ We want to extend a holomorphic line bundle defined in $(\rho>c)$ past $p$ with $\rho(p)=c$. We have to show that locally in $(\rho>c)$ the line bundle is holomorphically trivial and then extend the chart near $p.$  We reduce to a $\overline{\partial}$-problem for $(0,1)$-forms in the intersection of a neighborhood of $p$ with 
$(\rho>c).$ For that we need that the Levi form at $p$ has at least three strictly positive eigenvalues and we need to construct an appropriate neighborhood.  \

We will be interested in extending holomorphic line bundles across 
certain (singular) hypersurfaces.  The following lemma
gives some conditions that easily imply extension: 

\begin{lemma}\label{extension2}
Let $K$ be a closed subset of a complex manifold $X$
with $K=\overline{K^\circ}$.  Assume that there 
exist open covers $\{U_j'\}\ll\{U_j\}$ of $bK$ such 
that the following holds: 
\begin{itemize}
\item[(1)] Any holomorphic line bundle $L\rightarrow K^\circ$
is trivial over $U_j\cap K^\circ$, 
\item[(2)] any $f\in\mathcal O(U_i\cap U_j\cap K^\circ)$ extends 
to a holomorphic function on $U'_i\cap U'_j$ for $i\neq j$, and
\item[(3)] each connected component of $U_i'\cap U_j'\cap U'_k$
intersects $K^\circ$ for all $i,j,k$.
\end{itemize}
Then any holomorphic line bundle on $K^\circ$ extends 
to a holomorphic line bundle on $K\cup (\underset{j}{\cup} U'_j)$. 
\end{lemma}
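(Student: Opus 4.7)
The plan is to build an extension $\tilde L$ on $\tilde K := K \cup \bigcup_j U_j'$ out of a cocycle for $L$ whose data extend holomorphically across $bK$.

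To set up, I would first fix trivialisations on a suitable cover.  By (1), for each $j$ there is a nowhere-vanishing section $\tau_j\in\Gamma(U_j\cap K^\circ,L)$; the resulting transition functions are $f_{jk}:=\tau_j/\tau_k\in\mathcal O^*(U_j\cap U_k\cap K^\circ)$.  Independently, pick any open cover $\{V_\alpha\}$ of $K^\circ$ trivialising $L$, with sections $s_\alpha$ and transition functions $g_{\alpha\beta}:=s_\alpha/s_\beta$, and put $h_{\alpha j}:=s_\alpha/\tau_j\in\mathcal O^*(V_\alpha\cap U_j\cap K^\circ)$.  The family $\{V_\alpha\}\cup\{U_j'\}$ is then an open cover of $\tilde K$, since the $V_\alpha$ cover $K^\circ$ and the $U_j'$ cover $bK$.

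Next I would extend the $f_{jk}$ across $bK$.  By (2), both $f_{jk}$ and its reciprocal extend holomorphically to $U_j'\cap U_k'$, producing $\tilde f_{jk}$ and some $\tilde g$ with $\tilde f_{jk}\cdot\tilde g\equiv 1$ on $U_j'\cap U_k'\cap K^\circ$.  Taking $i=k$ in condition (3) shows that every connected component of $U_j'\cap U_k'$ meets $K^\circ$, so the identity theorem propagates this relation to the whole of $U_j'\cap U_k'$; in particular $\tilde f_{jk}\in\mathcal O^*(U_j'\cap U_k')$.  The functions $g_{\alpha\beta}$ and $h_{\alpha j}$ need no extension, since $V_\alpha\subset K^\circ$.

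It remains to verify the cocycle identities on all triple intersections.  Any triple involving at least one $V_\alpha$ is already contained in $K^\circ$, so its cocycle identity is inherited from $L$ for free.  The only genuine case is a triple of the form $U_i'\cap U_j'\cap U_k'$, where $\tilde f_{ij}\tilde f_{jk}\tilde f_{ki}\equiv 1$ holds on the intersection with $K^\circ$; condition (3) combined with the identity theorem then forces the identity on the full triple intersection.  The resulting cocycle $(g_{\alpha\beta},h_{\alpha j},\tilde f_{jk})$ therefore defines a holomorphic line bundle $\tilde L$ on $\tilde K$ whose restriction to $K^\circ$ coincides, by construction, with $L$.  The main obstacle throughout is exactly this propagation of nonvanishing and of cocycle identities from $K^\circ$ out to parts of $\tilde K$ lying outside $K$; condition (3) is engineered precisely to let the identity principle do that work on each connected component.
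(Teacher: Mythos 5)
Your proof is correct and follows essentially the same route as the paper's: trivialise $L$ over the $U_j\cap K^\circ$ and over an auxiliary cover of $K^\circ$, extend the transition functions $f_{jk}$ and their reciprocals across $bK$ via (2), and use (3) together with the identity principle to propagate both the nonvanishing and the cocycle identities to all of $U_i'\cap U_j'\cap U_k'$. Your explicit handling of the mixed transition functions $h_{\alpha j}$ and of the nonvanishing on components of $U_j'\cap U_k'$ only spells out details the paper leaves implicit.
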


\begin{proof}
Let $L\rightarrow K^\circ$ be a holomorphic line bundle.   
Let $\{V_j\}$ be a collection of open subsets of $K^\circ$
with $H^1(V_j,\mathcal O^*)=0$, and such that 
$\{V_j\}\cup\{U'_j\cap K^\circ\}$ is an open cover of $K^\circ$. 
Then by (1) the line bundle $L$ is represented by 
transition functions on the intersections of all pairs of 
open sets in this cover.  By (2) each transition function 
$f'_{ij}\in U_i'\cap U_j'\cap K^\circ$ extends to
a function $f_{ij}$ on  $U_j'\cap U_i$, and since $1/f'_{ij}$ 
also extends, $f_{ij}$ is non vanishing.  Now we are 
done if 
\begin{equation}\label{cocycle}
f_{ij}\cdot f_{jk}\cdot f_{ki}=1 \mbox{ on } U_i'\cap U_j'\cap U_k'
\mbox{ for all } i,j,k.  
\end{equation}

But this follows from (3) and the identity principle, since 
the cocycle condition (\ref{cocycle}) is satisfied on $U_i'\cap U_j'\cap U_k'\cap K^\circ$. 
\end{proof}

\begin{lemma}\label{smoothboundary}
Let $X$ be a complex manifold, let $\rho$ 
be a nice strongly $q$-convex exhaustion of $X$, $q\geq 3$, 
set $\Omega_c:=\{\rho > c\}$, and assume that $b\Omega$
does not contain any critical points for $\rho$. Then 
there exists $c'<c$ such that any holomorphic 
line bundle on $\Omega_c$ extends to a holomorphic 
line bundle on $\Omega_{c'}$.
\end{lemma}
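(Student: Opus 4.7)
The strategy is to apply Lemma~\ref{extension2} with $K := \{\rho \geq c\}$, whose topological boundary $bK = b\Omega_c = \{\rho = c\}$ is a compact smooth real hypersurface (compactness from $\rho$ being an exhaustion, smoothness from the no-critical-point hypothesis). For each $p \in b\Omega_c$, Theorem~\ref{AG} yields a Stein coordinate neighborhood $Q_p$ on which the $\overline\partial$-equation for $(0,1)$-forms is solvable on $Q_p \cap \Omega_c$. I fix nested pairs $U_p' \Subset U_p \Subset Q_p$, each $U_p$ a small contractible coordinate ball, and by compactness of $b\Omega_c$ reduce to a finite subcover.

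The three hypotheses of Lemma~\ref{extension2} are verified as follows. For~(1): each $U_p \cap \Omega_c$ is contractible, so any holomorphic line bundle on $\Omega_c$ is topologically trivial there; the remaining obstruction to holomorphic triviality is a $\overline\partial$-closed $(0,1)$-form on $U_p \cap \Omega_c$, which is $\overline\partial$-exact by Theorem~\ref{AG} --- exactly the mechanism recalled in Section~2. For~(2): since $\rho$ is strictly $q$-convex with $q \geq 3$, its Levi form restricted to $T^{1,0}\{\rho = c\}$ has at least $q - 1 \geq 2$ positive eigenvalues, and $\{\rho > c\}$ is the extendable side (as in the model $\rho = |z|^2$, from whose exterior Hartogs extension runs). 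The one-sided Hartogs extension theorem therefore provides, for each $p'' \in b\Omega_c$, a neighborhood $W_{p''}$ with $\mathcal O(W_{p''} \cap \Omega_c) \to \mathcal O(W_{p''})$ surjective; I shrink the $U_p'$ so that every nonempty pairwise intersection $U_p' \cap U_q'$ lies inside some such $W_{p''}$ based at a point $p'' \in \overline{U_p' \cap U_q'} \cap b\Omega_c$, and then glue by the identity theorem to extend any $f \in \mathcal O(U_p \cap U_q \cap \Omega_c)$ to $\mathcal O(U_p' \cap U_q')$. For~(3): taking each $U_p'$ to be a Euclidean coordinate ball makes all finite intersections convex and hence connected, and each such intersection meets $\Omega_c$ provided the cover is fine enough relative to the local geometry of $b\Omega_c$.

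Lemma~\ref{extension2} then extends $L$ to a holomorphic line bundle on $K \cup V$, where $V := \bigcup_p U_p'$ is an open neighborhood of $b\Omega_c$. Since $\{\rho \leq c\}$ is compact and $V$ is open containing $\{\rho = c\}$, the compact set $\{\rho \leq c\} \setminus V$ lies entirely in $\{\rho < c\}$; letting $c''$ denote the supremum of $\rho$ on it and choosing any $c' \in (c'', c)$, one obtains $\{c' < \rho \leq c\} \subset V$, hence $\Omega_{c'} = \{\rho \geq c\} \cup \{c' < \rho < c\} \subset K \cup V$, completing the extension. The main technical obstacle is hypothesis~(2): the cover must be fine enough that the local one-sided Hartogs extensions across $b\Omega_c$ glue into a single holomorphic extension on each pairwise intersection $U_p' \cap U_q'$ --- this is where the $q$-convexity of $\rho$ with $q \geq 3$ really enters (beyond its role in solving $\overline\partial$).
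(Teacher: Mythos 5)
Your proposal is correct and follows essentially the same route as the paper, whose proof is just a one-line pointer to exactly the ingredients you use: apply Lemma~\ref{extension2} to $K=\{\rho\ge c\}$, with Theorem~\ref{AG} giving local holomorphic triviality and the Hartogs phenomenon (Theorem 13.8 of Henkin--Leiterer) giving the extension of transition functions across the smooth $q$-convex boundary. (Only minor quibble: the extension step (2) needs just one positive Levi eigenvalue on the complex tangent space, i.e.\ $q\ge 2$; the hypothesis $q\ge 3$ is really consumed by the $\overline\partial$-solvability in step (1).)
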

\begin{proof} 
Construct a cover as in Lemma \ref{extension2}
by using 
Hartogs'  extension phenomenon; Theorem 13.8
in \cite{HenkinLeiterer2}, and Theorem \ref{AG}.
\end{proof}

We start with a brief discussion about the local structure of Morse exhaustion functions. 
If $p\in X$ is a Morse critical point for a strictly  $q$-convex function 
$\rho$, then there are local holomorphic coordinates 
$$
z=(z_1,...,z_q,z_{q+1},...,z_n):U_p\rightarrow\mathbb C^n,
$$
with $z(p)=0$, such that the following holds (see \cite{Forstneric}, Lemma 3.9.4, or \cite{Forstneric2})
$$
\rho(z',0)=\sum_{j=1}^{q-r} x_j^2 + \delta_j y_j^2 + \sum_{j=q-r+1}^q x_j^2 - \delta_jy_j^2 + o(|z'|^2), 
$$
with $(z',z'')\in\mathbb C^q\times\mathbb C^{n-q}$, $\delta_j>0$, $\delta_j<1$
for $j=q-r+1,...,q$, and $r$ is the Morse index of the function $\rho(z',0)$.   

\medskip

Furthermore
there is a real coordinate change $\psi(z',z'')$, holomorphic for each fixed 
$z''$, such that $\tilde\rho(z)=\rho(\psi(z))$ is of the form $\rho(\psi(z))=Q(z) + o(|\psi(z)|^2)$ with 
$$
Q(z)= \sum_{j=1}^{q-r} x_j^2 + \delta_j y_j^2 + \sum_{j=q-r+1}^q x_j^2 - \delta_jy_j^2 - 
\sum_{j=1}^{m} u_j^2 + \sum_{j=m+1}^{2(n-q)} u_j^2,
$$
where the $u_j$'s are real coordinates on $\mathbb C^{n-q}$, and $r+m$ is 
the Morse index of $\rho$ at $p$. 

We may further modify $\tilde\rho$ by choosing a $\chi\in\mathcal C^\infty_0([0,1])$ which is 
0 near the origin and 1 near 1.We then define 
$$
\tilde\rho_r(z) := Q(z) + \chi(\|z\|/r)o(\|z\|^2), 
$$
for $0<r<<1$.  Then $\tilde\rho_r$ converges to 
$\tilde\rho$ in $\mathcal C^2$-norm as $r\rightarrow 0$, 
since the remainder vanishes to order three, $\tilde\rho_r$ does 
not have more critical points than $\tilde\rho$, and 
$\tilde\rho(z)=Q(z)$
near the origin.  Such critical points, where $\tilde\rho_r$ is \emph{equal} to its second
order expansion, will be called  \emph{nice} critical points.   By our discussion, 
any $q$-complete manifold admits a smooth exhaustion function with only 
nice critical points.  

\medskip

Finally, by perturbing $\rho$ near a critical point we may assume that the $\delta_j$'s 
are equal to one for $j=1,...,q-r$ and as small as we like for $j=q-r+1,...,q$.
To see this, let $\chi$ be a cut off function with support in a neighborhood of $0$. 
Define

$$
\tilde{\rho}=\rho+ \chi (\frac{z}{r}) \sum_{j\leq q} \alpha_j y_j^2.
$$

Here $|\alpha_j|$ are small. 
It is easy to check that $\tilde{\rho}$ has no critical points except $0$ if $|\alpha_s|$ is not too large.  Near zero we have improved the $\delta_j$'s.  The constants $\alpha_j$ are controlled by how large the positive eigenvalues of $\rho$ are where $\chi$ varies. We can continue the process, with possibly smaller $r$'s, in finitely many steps.

\begin{lemma}\label{trivialcritical}
Let $X$ be a Stein manifold and let $\rho$ be a nice
strongly plurisubharmonic exhaustion function. 
Let $p\in X$ be a critical point for $\rho$.  Then there exists an open neighborhood 
$U_p$ of $p$ in $X$ such that any holomorphic line bundle 
$L\rightarrow \{\rho>\rho(p)\}$ is trivial over $U_p\cap\{\rho >\rho(p)\}$.

\end{lemma}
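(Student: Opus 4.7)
The plan is to exploit the ``nice critical point'' hypothesis to reduce $\rho$ to a homogeneous quadratic model near $p$, then split into an easy and a hard case according to the Morse index. I would WLOG take $\rho(p)=0$ and pick local holomorphic coordinates $z=(z_1,\ldots,z_n)$ centered at $p$ in which
$$
\rho(z)=Q(z)=\sum_{j=1}^{n-r}(x_j^2+\delta_j y_j^2)+\sum_{j=n-r+1}^n(x_j^2-\delta_j y_j^2),
$$
with $0\le r\le n$ the Morse index and $\delta_j$ arranged to be small. Put $U_p=\{|z|<\epsilon\}$ and $D:=U_p\cap\{\rho>0\}$. A standard radial retraction gives $D\simeq S^{2n-r-1}$, so $H^2(D,\Z)$ vanishes \emph{except} in the single case $n=3$, $r=3$.

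In the generic range $r\le 2n-4$, $L$ is topologically trivial on $D$; Theorem~\ref{AG} solves $\overline{\partial}$ for $(0,1)$-forms on $D$ (after possibly shrinking $U_p$), and the $\overline{\partial}$-argument recalled in the introduction produces a holomorphic splitting of the cocycle representing $L$. This is the short case.

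The hard case is $n=3$, $r=3$, where $D\simeq S^2$ and the potential Chern class of $L$ is the obstruction. My plan is to enlarge $D$ until the Chern class is forced to vanish. Fix $0<\eta<\epsilon'<\epsilon$, put $\mathrm{Sh}:=U_p\setminus\overline{B_\eta(p)}$, and set
$$
E:=D\cup\mathrm{Sh}=U_p\setminus\bigl(\{\rho\le 0\}\cap\overline{B_\eta(p)}\bigr).
$$
On the compact shell $\overline{\mathrm{Sh}}$ the function $\rho$ is smooth without critical points (since $p$ is the only critical point in $U_p$), and each level $\{\rho=c\}\cap\overline{\mathrm{Sh}}$ is strongly pseudoconcave from the $\{\rho>c\}$-side. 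I would apply the local Hartogs-type extension of Henkin--Leiterer (the ingredient of Lemma~\ref{smoothboundary}) at each point of each level and iterate in finitely many uniform steps, extending $L$ first from $\mathrm{Sh}\cap\{\rho>0\}=D\cap\mathrm{Sh}$ to $\mathrm{Sh}$, and then, by uniqueness of holomorphic extension, to all of $E$. The radial map $H(z,t)=z\bigl((1-t)+t\epsilon'/|z|\bigr)$ preserves the sign of $\rho$ along rays (because $Q$ is homogeneous of degree two), and a quick case split ($|z|>\eta$, or $\rho(z)>0$) shows $H(E\times[0,1])\subset E$; hence $E$ retracts onto the sphere $\{|z|=\epsilon'\}\simeq S^{2n-1}$. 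In particular $H^2(E,\Z)=0$, the extended bundle is topologically trivial on $E$, and so is $L|_D$. At this point the same Theorem~\ref{AG} plus $\overline{\partial}$-argument as in the easy case upgrades topological triviality to holomorphic triviality on $D$.

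The main obstacle will be the iterated Hartogs extension in the hard case: Lemma~\ref{smoothboundary} is stated for a $q$-convex exhaustion of the whole manifold, whereas $\rho$ is not an exhaustion of $\mathrm{Sh}$, so one has to run the argument from the local Henkin--Leiterer theorem directly and keep track of uniform pseudoconcavity down to $\min_{\overline{\mathrm{Sh}}}\rho$. Once that is in place, the homogeneity of the quadratic model is the geometric miracle that makes $E$ retract onto $S^{2n-1}$ and kills the Chern class.
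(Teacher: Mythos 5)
Your easy case ($r\le 2n-4$) is exactly the argument in the paper, but the hard case $n=3$, $r=3$ contains a genuine gap, and it is not the technical issue you flag at the end. The decisive point is that the extension of $L$ from $D\cap\mathrm{Sh}$ to all of $\mathrm{Sh}$ is not merely delicate to set up: it is \emph{topologically impossible} whenever $c_1(L)\neq 0$. Indeed $D$ and $D\cap\mathrm{Sh}$ both deformation retract (radially, then by contracting $y$) onto the same $2$-sphere $\{y=0,\,|x|=\epsilon'\}$, so restriction $H^2(D,\mathbb Z)\rightarrow H^2(D\cap\mathrm{Sh},\mathbb Z)$ is an isomorphism; since $\mathrm{Sh}\simeq S^{5}$ has vanishing $H^2$, any bundle on $\mathrm{Sh}$ restricts to a bundle with zero Chern class on $D\cap\mathrm{Sh}$. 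So the existence of your extension to the shell is \emph{equivalent} to the conclusion you are trying to prove, and the iterated Hartogs procedure must fail somewhere. It fails at condition (1) of Lemma \ref{extension2} (local holomorphic triviality of the current bundle along the front $\{\rho=c\}$). As $c$ decreases, $\{\rho>c\}\cap\mathrm{Sh}$ must change homotopy type from $S^2$ to $S^5$; since $\rho$ has no critical point in the shell, this change is produced by the level set $\{\rho=c\}$ becoming tangent to the inner sphere $\{|z|=\eta\}$ (for the model $Q=|x|^2-\delta|y|^2$ this occurs at $c^*=-\delta\eta^2$, along $\{x=0,\,|y|=\eta\}$). Near such a tangency point write $s$ for the coordinate transverse to the sphere in the $y$-directions; then locally $K^\circ=\{Q>c^*\}\cap\{|z|>\eta\}=\{|x|^2>\delta s\}\cap\{|x|^2>-s\}=\{|x|^2>\max(\delta s,-s)\}$, whose fibers over the (contractible) $s$-base are complements of closed balls in $\mathbb R^3_x$, hence $\simeq S^2$. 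So $U_j\cap K^\circ\simeq S^2$, and this local $2$-sphere is homotopic inside $\{\rho>c\}\cap\mathrm{Sh}$ to the global one carrying $c_1(L)$. Consequently $L$ is not even topologically trivial on $U_j\cap K^\circ$ when $c_1(L)\neq 0$, and the extension cannot be continued across that part of the front; the Chern class you hoped to kill by enlarging the domain reappears as a local obstruction exactly where the topology would have to change.

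This is precisely the point where a genuinely analytic input is unavoidable: the obstruction cannot be removed by contractible local pieces plus $\overline\partial$-solvability. The paper destroys it with Proposition \ref{extension} (Hartogs extension of line bundles across a real analytic totally real $3$-manifold), whose proof restricts the transition function to the slice $\{z_1=0\}$, extends it by the classical Hartogs phenomenon, and observes that a nonzero winding number would force zeroes of a nonvanishing function. The paper's route --- deform $\rho$ near $p$ so that the critical level set becomes $\{x=0\}$ near the origin, push the bundle down the \emph{compact} level sets of the globally defined $\tilde\rho$ to $\{\tilde\rho>0\}$, and then cross the totally real set by Proposition \ref{extension} --- avoids your tangency problem because there is no artificial inner boundary. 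Any repair of your argument will need an analytic ingredient of comparable strength; the shell alone cannot supply it.
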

\begin{proof}
In local coordinates, say on the unit ball $\mathbb B^n$ in $\mathbb C^n$, we may write 
$$
\rho(z)=\sum_{j=1}^{n_1} x_j^2 + y_j^2 + \sum_{j=n_1+1}^n x_j^2 - \delta_j y_j^2, 
$$
where we have normalized and assumed that $\rho(p)=\rho(0)=0$.  We 
know from Theorem \ref{AG} that we may solve $\overline\partial$
for $(0,1)$-forms on $\{\rho>0\}\cap U$ where $U$ is some neighborhood 
of the origin in $\mathbb C^n$, and so the main point is to show 
that $L$ is topologically trivial on $\{\rho>0\}\cap U$.  We see 
that $\{\rho>0\}\cap\mathbb B^n$ retracts onto a $(2\cdot n_1 + n -1)$-sphere, 
and so the only case we need to consider is if $n=3$ and $n_1=0$. \

Choose a suitable smooth function $\chi$ 
which vanishes near the origin, is increasing  and equal to  $1$ for $t>\delta$ for some small $\delta$.
Then  defining $\tilde\rho(z):=\sum_{j=1}^3 x_j^2 - \delta_j\cdot\chi(\|z\|^2)\cdot y_j^2$, 
we have that $\tilde\rho$ has no critical points and 
is strictly plurisubharmonic
on $\chi>0$ (recall that the $\delta_j$'s can be assumed to be as small as we like).   Note that $\tilde\rho$ extends to $X\setminus\mathbb B^3$ since 
$\tilde\rho$ is equal to $\rho$ near $b\mathbb B^3$.  (Here the reader could also 
consult Section 3.10 of \cite{Forstneric})  \

To prove that $L$ is (locally near $p$) topologically trivial it is enough to 
prove that $L$ is topologically trivial when restricted to 
$$
T:=\{z\in\mathbb B^3:|x|^2>1/2, |y|=0\}. 
$$
(Since $\{\rho>0\}\cap\mathbb B^3$ retracts to $T$.)
We start with the line bundle $\tilde L:=L|_{\{\tilde\rho>1/2\}}$ (we look 
at the extension of $\tilde\rho$ to $X$).   We first 
claim that $\tilde L$ extends holomorphically to $\{\tilde\rho>0\}$. 
This follows from Lemma \ref{smoothboundary} by considering 
the set 
$$
\mathcal S:=\underset{s<1/2}{\inf} \{\tilde L \mbox{ extends uniquely to } \{\tilde\rho>s\}\}.
$$
But the set $\tilde\rho=0$ is a real analytic totally real manifold 
near the origin, and so by Proposition \ref{extension} we have that $\tilde L$
extends to a neighborhood of the origin.   This shows 
that $\tilde L$ is topologically trivial on some tube 
$T_\delta:=\{z\in\mathbb B^3:|y^2|<\delta\}, $
hence $L$ is trivial on $T$.  
\end{proof}

We are now ready to prove Theorem 1.7 in the special case where $X$ is Stein. 

\begin{theorem}\label{Stein}
Let $X$ be a Stein manifold of dimension 
greater than or equal to three, and let $\rho$ be a strongly plurisubharmonic exhaustion of $X$. 
Then any holomorphic line bundle $L\rightarrow \{\rho>c\}$ extends uniquely to 
a holomorphic line bundle on $X$.  
\end{theorem}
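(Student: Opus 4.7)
The plan is a sublevel-set induction on $\rho$. After perturbing as in the discussion preceding Lemma~\ref{trivialcritical}, I may assume $\rho$ is a smooth strongly plurisubharmonic exhaustion whose critical points are nondegenerate and have pairwise distinct critical values; since $n \geq 3$, strong plurisubharmonicity makes $\rho$ strictly $3$-convex, so Theorem~\ref{AG} is available pointwise. Let $\mathcal{E} \subseteq \R$ be the set of $c'$ such that $L$ admits a (necessarily unique) holomorphic extension to $\{\rho > c'\}$. By hypothesis $c \in \mathcal{E}$, and since $X = \bigcup_{c' \in \R} \{\rho > c'\}$, the theorem reduces to showing $\mathcal{E} = \R$.

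The set $\mathcal{E}$ is downward-closed under restriction and closed in $\R$: if $c_n \downarrow c_0$ with $c_n \in \mathcal{E}$, uniqueness on overlaps assembles the individual extensions into one on $\bigcup_n \{\rho > c_n\} = \{\rho > c_0\}$. For openness, suppose $c_0 := \inf \mathcal{E} > -\infty$. If $c_0$ is a regular value of $\rho$, Lemma~\ref{smoothboundary} extends $L$ past $c_0$, contradicting the definition of $c_0$. If $c_0$ is a critical value with (by distinctness) unique critical point $p$, cover the compact set $\{\rho = c_0\}$ by the neighborhood $U_p$ from Lemma~\ref{trivialcritical} together with finitely many small neighborhoods $V_q$ of smooth boundary points $q \in \{\rho = c_0\} \setminus \{p\}$, chosen as in the proof of Lemma~\ref{smoothboundary} so that: (1) any holomorphic line bundle on $U_p \cap \{\rho > c_0\}$ or on $V_q \cap \{\rho > c_0\}$ is holomorphically trivial, (2) holomorphic functions on pairwise intersections extend (Hartogs' phenomenon across smooth strongly pseudoconvex boundaries via Henkin--Leiterer, and across the isolated point $p$ via Riemann's removability theorem), and (3) triple intersections are connected and meet $\{\rho > c_0\}$. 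Applying Lemma~\ref{extension2} with $K := \{\rho \geq c_0\}$ then produces a holomorphic extension of $L$ to a neighborhood of $K$, hence to $\{\rho > c_0 - \delta\}$ for some $\delta > 0$, contradicting $c_0 = \inf \mathcal{E}$.

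Uniqueness, both inside the induction and in the final theorem, follows from the identity theorem applied to transition cocycles on a common trivializing cover: they agree on the open set $\{\rho > c\}$ and hence everywhere. I expect the main obstacle to be the critical-point step, specifically at a Morse index $2n-3$ critical point, where topologically non-trivial line bundles genuinely exist near $p$ and the triviality of $L$ on $U_p \cap \{\rho > c_0\}$ is an analytic rather than topological phenomenon. That entire difficulty is absorbed into Lemma~\ref{trivialcritical}, which in turn rests on Proposition~\ref{extension} (Hartogs extension of line bundles across a real analytic totally real three-fold); granted those inputs, the remaining work for Theorem~\ref{Stein} is essentially the cover bookkeeping indicated above.
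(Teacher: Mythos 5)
Your overall strategy coincides with the paper's: a downward continuity argument on the sublevel sets, with Lemma \ref{smoothboundary} handling regular values and Lemma \ref{trivialcritical} (fed through the cover mechanism of Lemma \ref{extension2}) handling the critical level. The only structural difference is minor: at a critical level the paper first extends $L$ locally near $p$ via Lemma \ref{trivialcritical} and then replaces $\rho$ by $\rho_\delta=\rho-\delta\chi(\|z\|^2/r^2)$ so that the new boundary has no critical points and Lemma \ref{smoothboundary} can simply be reapplied, whereas you cover the whole level set $\{\rho=c_0\}$ at once and invoke Lemma \ref{extension2} directly; both routes work and require the same verifications. (Your appeal to Riemann removability at $p$ is not needed: condition (2) of Lemma \ref{extension2} only concerns pairs $i\neq j$, and those intersections can be arranged to avoid $p$, so the Hartogs phenomenon across the smooth strictly pseudoconvex part of the level set suffices.)

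The one genuine gap is the uniqueness step, which is load-bearing in your argument: you need it to know that the extensions to $\{\rho>c_n\}$ for $c_n\downarrow c_0$ are mutually compatible and hence glue to an extension on $\{\rho>c_0\}$. Your justification --- ``the identity theorem applied to transition cocycles on a common trivializing cover'' --- does not prove it. Two extensions $L_1,L_2$ of $L$ that are isomorphic over $\{\rho>c\}$ need not be given by cocycles that agree there; they are only cohomologous, i.e.\ $f_{\alpha\beta}/g_{\alpha\beta}=h_\beta/h_\alpha$ with $h_\alpha\in\mathcal O^*\bigl(W_\alpha\cap\{\rho>c\}\bigr)$, and to conclude $L_1\cong L_2$ on the larger set you must extend each $h_\alpha$ to a nowhere-vanishing holomorphic function on all of $W_\alpha$. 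Equivalently, you must show that a holomorphic line bundle on $\{\rho>c'\}$ which is trivial on $\{\rho>c\}$ is trivial; this is exactly the paper's Lemma \ref{uniqueness}, and it requires its own downward induction through the critical levels, extending the splitting functions by the Hartogs phenomenon (Henkin--Leiterer) rather than by the identity theorem. With that lemma supplied, your proposal is sound.
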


\begin{proof}

We may assume that $\rho$ is a \emph{nice} strongly plurisubharmonic exhaustion. 
Assume for simplicity that $\rho$ has a single minimum point $x$ with $\rho(x)=0$.
We set
$$
s:=\inf\{c'<c:\mbox{ L extends uniquely to } \{\rho>c'\}\},
$$
and claim that $s=0$.   Otherwise it follows by Lemma \ref{smoothboundary}
that $\{\rho=s\}$ contains a critical point $p$, and in local coordinates we 
may write 
$$
\rho(z)=\rho(p) + \sum_{j=1}^{n_1} (x_j^2 + y_j^2) + \sum_{j=n_1+1}^n (x_j^2 - \delta_j\cdot y_j^2).
$$
By Lemma \ref{trivialcritical} there exists an $r>0$ such that $L$ extends 
to a line bundle on $\{\rho>s\}\cap\mathbb B_r$.  Let $\chi$ be a strictly 
positive function which is 1 near the origin and compactly supported on $[0,1)$, 
and consider 
$$
\rho_{\delta}(z) = \rho(z) - \delta\cdot\chi(\|z\|^2/r^2). 
$$
If $\delta$ is chosen small enough we have that $\rho_\delta$ is strictly 
plurisubharmonic and that the origin is the only critical point of $\rho_\delta$ in $\mathbb B_r$.
Now $L$ lives on $\{\rho_\delta>0\}$.   
By Lemma \ref{smoothboundary} we have that $L$ extends to 
a smooth domain $\{\rho_\delta>c''\}\supset\{\rho>c'\}$ with $c'<s$.  This contradicts 
the assumption that $s>0$.
Uniqueness follow from Lemma \ref{uniqueness}.
\end{proof}

\begin{lemma}\label{uniqueness}
Let $X$ be a complex manifold with a strongly 
$q$-convex exhaustion function $\rho$ with $q\geq 2$,
and let $L\rightarrow X$ be a holomorphic line bundle.
If $L$ is trivial over $\Omega_c=\{\rho>c\}$ then $L$ is trivial. 
Consequently, if $\tilde L\rightarrow\{\rho>c\}$ is a holomorphic
line bundle, then there is a \emph{unique} extension 
of $\tilde L$ to $X$, if such an extension exists, 
and if $L_k\rightarrow X$ satisfies $L_k^{\otimes k}=L$
on $\{\rho>c\}$ then $L_k^{\otimes k}=L$.
\end{lemma}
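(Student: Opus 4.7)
The plan is to prove the main assertion---that $L\to X$ trivial over $\Omega_c$ forces $L$ to be trivial over $X$---and to deduce the two stated consequences by a tensor argument. If $\tilde L_1,\tilde L_2\to X$ both extend $\tilde L$, then $\tilde L_1\otimes\tilde L_2^{-1}$ is trivial on $\Omega_c$, hence trivial on $X$ by the main assertion, so $\tilde L_1\cong\tilde L_2$. Applied to $L_k^{\otimes k}\otimes L^{-1}$, the same argument yields the statement about $k$-th roots.

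For the main assertion I would start with a nowhere vanishing section $s\in H^0(\Omega_c,L)$ given by the triviality of $L$ over $\Omega_c$, and produce an extension $\tilde s\in H^0(X,L)$. Since $\rho$ is strongly $q$-convex with $q\geq 2$, the Andreotti-Grauert vanishing theorem gives $H^p(X,\mathcal F)=0$ for every coherent sheaf $\mathcal F$ and every $p\geq n-q+1$; applied to $\mathcal F=L^{-1}\otimes K_X$ this yields $H^{n-1}(X,L^{-1}\otimes K_X)=0$, and Serre duality for separated cohomology on $q$-convex manifolds then forces $H^1_c(X,L)=0$. Together with the identity principle on $\Omega_c$ (which is connected when $q\geq 2$), this yields the Hartogs phenomenon: the restriction $H^0(X,L)\to H^0(\Omega_c,L)$ is bijective, supplying the unique $\tilde s$.

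It remains to show that $\tilde s$ has no zeros. Its zero set $V:=\{\tilde s=0\}$ is an analytic subvariety of $X$, and since $\tilde s\not\equiv 0$ it has pure codimension one. Moreover $V$ is contained in the compact set $\{\rho\leq c\}$, because $\tilde s=s$ is nowhere vanishing on $\Omega_c$, so $V$ is compact. I then invoke the maximum principle: pick $p\in V$ at which $\rho|_V$ attains its maximum, on a regular stratum of maximal dimension, so that the second-derivative test gives $\mathcal L_p(\rho)|_{T_pV}\leq 0$. But the positive eigenspace of $\mathcal L_p(\rho)$ has complex dimension $\geq q\geq 2$, so by dimension count it meets the $(n-1)$-dimensional $T_pV$ in dimension at least $q+(n-1)-n\geq 1$; this produces a tangent direction with strictly positive Levi form, a contradiction. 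Hence $V=\emptyset$, and $\tilde s$ trivializes $L$ on $X$.

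The principal hurdle is the global Hartogs extension in the middle paragraph; once that is granted, the zero-set argument reduces to linear algebra together with the maximum principle. Both steps use $q\geq 2$ tightly---the first through Andreotti-Grauert vanishing in degree $n-1$, the second through the dimension inequality $q+(n-1)-n\geq 1$.
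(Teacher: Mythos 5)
Your argument is correct in substance but follows a genuinely different route from the paper's. The paper proves the lemma by descending through the level sets of $\rho$: it represents $L$ by a cocycle on a cover of $b\Omega_c$, extends the splitting functions across noncritical levels using the local Hartogs phenomenon for holomorphic functions (Henkin--Leiterer, Theorem 13.8), and passes critical points by extending functions into a full neighborhood of the critical point; this reuses the covering machinery of Lemmas 3.2--3.3 and needs nothing beyond local extension of functions. You instead argue globally: you extend a nowhere-vanishing section $s$ of $L$ from $\Omega_c$ to all of $X$ via $H^1_c(X,L)=0$, obtained from Andreotti--Grauert vanishing ($H^p(X,\mathcal F)=0$ for $p\ge n-q+1\le n-1$) together with Serre duality (legitimate here since the relevant groups vanish and are therefore separated), and then rule out zeros of the extension because the zero divisor would be a compact analytic set of dimension $n-1\ge n-q+1$, which cannot carry a maximum of a strictly $q$-convex function. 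This is more conceptual and avoids the Morse-theoretic bookkeeping, at the price of heavier machinery.

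Two points in your write-up need repair. First, the cutoff construction only gives $\tilde s=s$ outside a compact set; to conclude $\tilde s=s$ on all of $\Omega_c$ by the identity theorem you should note that no connected component of $\Omega_c$ is relatively compact (otherwise $\rho$ would attain an interior maximum there, impossible already for $q\ge 1$), so every component meets the region where $\tilde s=s$. Second, the maximum of $\rho|_V$ may be attained only at singular points of $V$, where $T_pV$ does not exist and your dimension count does not literally apply. The standard fix: at such a point $p$ write $\rho=\rho(p)+2\operatorname{Re}P+\mathcal L_p(\rho)+o(|z|^2)$ with $P$ a holomorphic quadratic polynomial, intersect $V$ with the affine positive eigenspace $E$ through $p$ to get an analytic germ of dimension $\ge (n-1)+q-n\ge 1$, and apply the maximum principle to $\operatorname{Re}P$ along a curve in that germ to produce points of $V$ arbitrarily near $p$ with $\rho>\rho(p)$. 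Equivalently, you may simply invoke the classical Andreotti--Grauert fact that a $q$-complete manifold (in the paper's convention) contains no compact analytic subset of dimension $\ge n-q+1$. With these repairs the proof is complete, and your derivation of uniqueness and of the statement about roots by tensoring is exactly right.
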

\begin{proof}
We may assume that $\rho$ is Morse with only 
nice critical points. 
We may assume that $b\Omega_c=\{\rho=c\}$ is smooth.  
Let $U_0=\Omega_c$ and let $U_1,...,U_m$ be 
a cover of $b\Omega_c$ similar to that of Lemma \ref{extension2}. 
Then for $c'<c$ close enough to $c$ we have that 
$L\rightarrow\Omega_{c'}$ is represented by 
a cocyle $(f_{ij})$ with respect to this cover.  
The cocycle splits over $\Omega_c$ by assumption, 
and the splitting extends to $\Omega_{c'}$ for 
$c'$ close to $c$.

Let $c_0$ be the largest $c_0<c$ such that $b\Omega_{c_0}$
contains a critical point $p$.  The argument 
above implies that $L$ is trivial over $\Omega_{c_0}$.

Let $U_1=\{\rho>c_0\}$.
By Theorem 13.8
in \cite{HenkinLeiterer2} there is a neighborhood
$U_2$ of $p$ such that any holomorphic function 
on $U_1\cap U_2$ extends to $U_2$.  This shows 
that $L$ is trivial over $\Omega_{c_0}\cup U_2$.
Next modify $\rho$ as in the proof of Theorem \ref{Stein}, 
and we may use the above argument to show that 
$L$ is trivial until we reach the next critical value. 
\end{proof}

\begin{theorem} Let $U$ be a complex manifold of dimension $n.$ Assume $\rho$ is a 
smooth exhaustion function which is strictly $q$-convex with $q \geq 3.$ Let $\mathcal N$ be a holomorphic line bundle defined in $\{\rho>c\}.$ Then $\mathcal N$ extends uniquely to a holomorphic line bundle on $U.$
\end{theorem}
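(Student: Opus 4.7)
The plan is to follow the proof of Theorem \ref{Stein} step by step, upgrading each ingredient from the strongly plurisubharmonic setting to the strictly $q$-convex one with $q\geq 3$. Using the normal-form and nice-critical-point constructions recalled before Lemma \ref{trivialcritical}, I first replace $\rho$ by a smooth strictly $q$-convex Morse exhaustion $\tilde\rho$ with only nice critical points, whose sublevel sets agree with those of $\rho$ outside a compact set. Then set
$$
s:=\inf\bigl\{c''<c : \cN \text{ extends uniquely to } \{\tilde\rho>c''\}\bigr\},
$$
and argue by contradiction that $s=\inf_U\tilde\rho$. If $s$ were strictly greater, Lemma \ref{smoothboundary} (which already applies for $q\geq 3$) forces $\{\tilde\rho=s\}$ to contain a critical point $p$; once local triviality of $\cN$ near $p$ is established, the standard perturbation $\tilde\rho_\delta=\tilde\rho-\delta\chi(\|z\|^2/r^2)$ together with a second application of Lemma \ref{smoothboundary} pushes the extension strictly below $s$, a contradiction. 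Uniqueness is then immediate from Lemma \ref{uniqueness}.

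The heart of the proof is therefore a strictly $q$-convex analog of Lemma \ref{trivialcritical}: $\cN$ is holomorphically trivial in a neighborhood of $p$ intersected with $\{\tilde\rho>\tilde\rho(p)\}$. Working in local coordinates where $\tilde\rho-\tilde\rho(p)$ coincides with the quadratic form $Q$ displayed earlier, the set $\{Q>0\}\cap\B^{2n}$ deformation retracts onto a sphere of real dimension $2n-(r+m)-1$, where $r+m$ is the total Morse index. Whenever this dimension differs from $2$, the sphere has vanishing $H^2$ and every topological line bundle is trivial there; Theorem \ref{AG} (valid precisely because $q\geq 3$) then provides local $\overline\partial$-solvability and upgrades topological triviality to holomorphic triviality, just as in the Stein case.

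The exceptional case is $r+m=2n-3$, and a short check using $r\leq q$ and $m\leq 2(n-q)$ forces $q=3$, $r=3$, $m=2(n-3)$, so that $Q$ takes the form
$$
Q(z,w)=\sum_{j=1}^{3}\bigl(x_j^2-\delta_j y_j^2\bigr)-\sum_{k=1}^{n-3}|w_k|^2,
$$
with $z\in\C^3$, $w\in\C^{n-3}$ and $\delta_j<1$ as small as we wish. Mimicking the totally real maneuver of Lemma \ref{trivialcritical}, I introduce
$$
\widetilde Q(z,w)=\sum_{j=1}^{3}\bigl(x_j^2-\delta_j\chi y_j^2\bigr)-\chi\sum_{k=1}^{n-3}|w_k|^2, \qquad \chi=\chi\bigl(\|(z,w)\|^2/r^2\bigr),
$$
with $\chi$ vanishing near $0$ and equal to $1$ for $\|\cdot\|\geq r$. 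Near the origin $\widetilde Q=x_1^2+x_2^2+x_3^2$, whose zero set is the real analytic CR submanifold $\{\mathrm{Re}\,z_1=\mathrm{Re}\,z_2=\mathrm{Re}\,z_3=0\}\times\C^{n-3}$ of real codimension three and CR-dimension $n-3$. Iterated use of Lemma \ref{smoothboundary} extends the bundle from $\{\widetilde Q>1/2\}$ down to $\{\widetilde Q>0\}$; declaring it trivial on $\{\widetilde Q<0\}$ places it on the complement of a CR submanifold to which Theorem \ref{main} applies, extending it across the zero set of $\widetilde Q$. The extended bundle now lives on a contractible neighborhood of $p$, so its restriction to $\{\tilde\rho>\tilde\rho(p)\}$ is trivial there, which is what was required.

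The step I expect to demand the most care is verifying that $\widetilde Q$ is strictly $3$-convex throughout the transition region $0<\chi<1$ and has no critical points on any level set $\{\widetilde Q=c\}$ with $c>0$. These properties rest on choosing the $\delta_j$ and $r$ small enough that the three positive eigenvalues contributed by the $x_j$-directions dominate the error terms produced by the derivatives of $\chi$ and by the $-\chi|w|^2$ block; this is the same style of perturbation argument used in the Stein version of Lemma \ref{trivialcritical}, but must be redone in the presence of the additional $w$-variables and with only $q=3$ guaranteed positive Hessian eigenvalues to spend.
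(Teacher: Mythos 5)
Your proposal is correct and follows essentially the same route as the paper: reduce to local triviality at critical points via Lemma \ref{smoothboundary}, note that only total Morse index $2n-3$ (which forces $q=3$, $r=3$, $m=2(n-3)$) is topologically problematic, and there deform the exhaustion with a cutoff so that its zero level becomes real analytic near the critical point and apply the Section 2 extension results. The only cosmetic difference is that you invoke Theorem \ref{main} for the CR manifold $\{\mathrm{Re}\,z_1=\mathrm{Re}\,z_2=\mathrm{Re}\,z_3=0\}\times\mathbb{C}^{n-3}$ in the full space, whereas the paper first restricts to the slice $\{z_4=\cdots=z_n=0\}$ (onto which $\{\rho>0\}$ retracts) and applies the totally real case in $\mathbb{C}^3$ --- but the proof of Theorem \ref{main} for positive CR dimension performs exactly that slice reduction, so the two arguments coincide.
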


\begin{proof}
We can assume that $\rho$ is a Morse function. A change with respect to the previous proof is that at a critical point $p$ for $\rho$, there are \emph{real} coordinates $z_j=x_j+iy_j$ such that 
$$
\rho= \sum_{s=1}^{n_1} (x_s^2+y_s^2)+\sum_{n_1+1}^{n_2} (x_s^2-\delta_s y_s^2)+\sum_{n_2+1}^n a_kx_k^2 + b_ky_k^2,
$$
\noindent with $n_2 \geq q \geq 3$, $\delta_s<1$, and $a_k,b_k=\pm 1$.  The only difference with respect to the previous proof 
is that we need to prove that any line bundle is trivial near such a critical point.  If 
the number of positive eigenvalues is greater than three, 
it follows that 
any bundle is topologically trivial, and since we can solve $\overline\partial$ we get 
the desired result.  \

Otherwise we have in local coordinates, 
$$
\tilde\rho(z)=\sum_{s=1}^3 x_s^2 - \delta_jy_s^2 - \sum_{j=4}^n (x_s^2 + y_s^2). 
$$
We want to show that $L$ restricted to $\{\rho>0\}\cap\{z_{4}=\cdot\cdot\cdot=z_n=0\}$ is topologically 
trivial near the origin, because $\{\rho>0\}$ retracts to this set near the origin.  \

Choose a function $\chi$ as in the proof of Theorem 3.4 and define 
$$
\rho(z) = \rho(z)=\sum_{s=1}^3 x_s^2 - \chi(\|z\|)\delta_jy_s^2 - \sum_{j=4}^n (x_s^2 + y_s^2), 
$$
such that $\tilde\rho$ has no critical points outside $\chi=0$ and such that 
$\tilde\rho$ is strongly $q$-convex.   Again $\tilde\rho$ extends to $X$.  
We may now extend $L$ restricted to $\{\tilde\rho > c > 0\}$ to 
a bundle $\tilde L$ on $\{\tilde\rho>0\}$.  But now $\tilde L$ restricted to 
$\{\tilde\rho>0\}\cap\{z_4=\cdot\cdot\cdot = z_n=0\}$ is a holomorphic line bundle 
in the complement of a totally real real analytic three dimensional manifold 
near the origin, hence the restriction of $\tilde L$ extends across.  Recall 
that the coordinate change $\psi^{-1}$ from the discussion following Theorem \ref{AG} is holomorphic 
when restricted to $\{z_4=\cdot\cdot\cdot=z_n=0\}$.   As in the proof of Theorem 
3.4 this implies that the original $L$ was trivial near the origin on $\{z_4=\cdot\cdot\cdot=z_n=0\}$.

\end{proof}

We can then prove Theorem 1.8 if we rely on the smoothing of $q$-convex functions with corners obtained by the first author and K. Diederich [4]. We first recall their notation and statement.

Suppose we have a strictly $q$-convex function with corners in the meaning of the present paper.
So if $Q$ denotes the $q$ in [4],
 then the functions have $n-Q+1$ strictly positive eigenvalues.
 Hence $q=n-Q+1, Q=n-q+1$.
 After smoothing we are left with a $\tilde{Q}$-convex function with
 $$
 \tilde{Q}= n- \left[ \frac{n}{Q}\right] +1
 $$
 So we want to find $\tilde{q}$:
 $$
 \tilde{q}= n-\tilde{Q}+1.
 $$
 So 
 $$
 \tilde{q}= n-\left(n- \left[ \frac{n}{Q}\right] +1 \right)+1
 $$
 
 $$
 \tilde{q} = \left[ \frac{n}{Q}\right]= \left[ \frac{n}{n-q+1}\right]
 $$

Then in our notation a consequence of the smoothing theorem is.

\begin{theorem}
Let $U$ be a complex manifold of dimension $n.$ Let $\rho$ be an exhaustion function which is strictly $q$-convex with corners. Then there is a smooth exhaustion function $\tilde{\rho}$ which is $\tilde{q}$-convex,  
$$\tilde{q}= \left[ \frac{n}{n-q+1}    \right].$$
\end{theorem}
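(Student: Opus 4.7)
My plan is essentially a notational translation. All of the analytic work --- constructing a smooth function whose Levi form retains a controlled number of positive eigenvalues from a finite max of $\mathcal{C}^2$ functions --- is the content of the main smoothing theorem of Diederich--Forn\ae ss [4], and I would simply invoke it and then convert between the two conventions for $q$-convexity.

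First I would recall [4] in its original formulation. In that paper, a $\mathcal{C}^2$ function is called strictly $Q$-convex if its complex Hessian has at least $n - Q + 1$ strictly positive eigenvalues, and the main theorem asserts that any strictly $Q$-convex exhaustion with corners on an $n$-dimensional complex manifold can be approximated, with arbitrarily small $\mathcal{C}^0$-error outside any fixed compact set, by a smooth strictly $\tilde Q$-convex exhaustion with
\[
\tilde Q \;=\; n - \left[\frac{n}{Q}\right] + 1.
\]
Since the error can be made small outside compacta, properness (hence the exhaustion property) is automatically preserved.

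Second I would set up the dictionary. In the present paper, strictly $q$-convex means the Levi form has at least $q$ positive eigenvalues, so the two conventions are related by $q = n - Q + 1$, equivalently $Q = n - q + 1$. Feeding the given exhaustion $\rho$ into [4] with this value of $Q$ produces a smooth strictly $\tilde Q$-convex exhaustion $\tilde\rho$, and the displayed formula for $\tilde Q$ translates via $\tilde q = n - \tilde Q + 1$ into
\[
\tilde q \;=\; \left[\frac{n}{n-q+1}\right],
\]
as claimed.

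The only genuine point to check is that the application of [4] is compatible with the global setup: locally $\rho$ is a max of finitely many strictly $q$-convex $\mathcal{C}^2$ functions, and one wants the smoothed version to be globally defined and still an exhaustion. This is already handled in [4] by performing the regularization in a neighborhood of the ``corner set'' and patching with the original $\rho$ away from it, using that the regularization agrees with $\rho$ up to an arbitrarily small perturbation. Beyond this bookkeeping, the theorem is a one-line computation.
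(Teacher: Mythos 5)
Your proposal is correct and follows exactly the paper's own route: both simply invoke the Diederich--Forn\ae ss smoothing theorem [4] and perform the dictionary $Q = n-q+1$, $\tilde q = n - \tilde Q + 1$ to arrive at $\tilde q = \left[\frac{n}{n-q+1}\right]$. The only difference is that you explicitly flag the preservation of the exhaustion property under the small perturbation, which the paper leaves implicit.
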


Hence in order to prove Theorem 1.8 we just use the smoothing.
We need
 $$ \left[ \frac{n}{n-q+1}\right] \geq 3.$$
 
 This is equivalent to $\frac{n}{n-q+1} \geq 3$ so
 $n\geq 3n-3q+3$ so $2n+3\leq 3q$ so 
 $q\geq \frac{2n+3}{3}.$ Then Theorem 1.8 is a consequence of Theorem 1.7.

Observe that for $n \geq 6$ we get values of $q$ strictly smaller than $n.$

\section{Proof of Theorem 1.6}

\begin{lemma}
Let $L$ be a smooth complex manifold of dimension $q$ in an open set $W\subset \mathbb P^n.$
Suppose that $p\in L.$ Then there is a  neighborhood of $p$, $W' \subset \subset W,$ such that
$-\log \sin^2 d(z,L)$ is  $\mathcal C^\infty$ strictly $q+1-$convex in $W'.$ Moreover if $K$ is a compact subset of $W' \setminus L$, then if $L'$ is close enough to $L$ then the positive eigenvalues of $-\log \sin^2 d (z,L')$ are uniformly bounded below by a positive constant on $K.$
\end{lemma}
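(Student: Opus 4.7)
The plan is to do a direct local Levi-form computation, reducing the general complex submanifold to the linear model $\mathbb{P}^q\subset\mathbb{P}^n$. Work in affine coordinates $z=(z_1,\dots,z_n)$ on $\mathbb{P}^n$ centered at $p$, in which the Fubini--Study potential is $\log(1+|z|^2)$. By the implicit function theorem, a local biholomorphism near $p$ straightens $L$ to $\{z_{q+1}=\cdots=z_n=0\}$; split $z=(z',z'')$ with $z'\in\mathbb{C}^q$ and $z''\in\mathbb{C}^{n-q}$.

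The next task is to identify $\sin^2 d(z,L)$ locally. The squared Riemannian distance $d(z,L)^2$ is $\mathcal{C}^\infty$ in a neighborhood of $L$, vanishes to exactly second order on $L$, and its $(1,1)$-Hessian in the normal directions at points of $L$ equals the (pushforward) Fubini--Study metric restricted to $N_L$. A Taylor expansion combined with $\sin^2 d=d^2+O(d^4)$ yields
$$
\sin^2 d(z,L)=\bar{z}^{\prime\prime T}A(z,\bar z)\,z''+R(z,\bar z),
$$
with $A$ smooth, positive-definite Hermitian-matrix-valued, $A(0)=\mathrm{Id}$, and $R$ smooth and of higher order at $L$. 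For the linear model $L=\mathbb{P}^q$ one has the exact identity $\sin^2 d(z,L)=|z''|^2/(1+|z|^2)$, whence $-\log\sin^2 d(z,L)=-\log|z''|^2+\log(1+|z|^2)$; the general case is a smooth perturbation of this.

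I would then compute $i\partial\bar\partial(-\log\sin^2 d(z,L))$ by direct comparison with the linear model. At a point $z$ with $z''\neq 0$, $i\partial\bar\partial(-\log|z''|^2)$ vanishes on the $(q+1)$-dimensional subspace $\mathbb{C}^q\oplus\mathbb{C}\cdot z''$ and equals $-|v|^2/|z''|^2+|\langle v,z''\rangle|^2/|z''|^4$ on $v\in\mathbb{C}^{n-q}$, which by Cauchy--Schwarz is negative semidefinite and strictly negative on the $(n-q-1)$-dimensional orthogonal complement of $z''$. Adding $i\partial\bar\partial\log(1+|z|^2)$, a strictly positive-definite form uniformly bounded below on compact sets, produces $q+1$ eigenvalues bounded below by a positive constant. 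For general $L$ the Levi form of the singular term $-\log(\bar{z}^{\prime\prime T}Az'')$ equals that of $-\log|z''|^2$ plus a bounded correction involving derivatives of $A$, while the smooth remainder and the $\sin^2$--$d^2$ discrepancy contribute uniformly bounded Levi-form terms. Shrinking $W'$ if needed so that all these bounded corrections are dominated by the strictly positive Fubini--Study contribution yields at least $q+1$ eigenvalues of $i\partial\bar\partial(-\log\sin^2 d(z,L))$ bounded below by some $c>0$ on $W'\setminus L$.

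For the moreover statement, fix a compact $K\subset W'\setminus L$, so $d(z,L)\geq\delta_0>0$ on $K$. For a complex submanifold $L'$ of dimension $q$ sufficiently $\mathcal{C}^k$-close to $L$, the straightening biholomorphism, the matrix $A$, and the remainder $R$ for $L'$ depend continuously on $L'$, hence the Levi form of $-\log\sin^2 d(\cdot,L')$ converges in $\mathcal{C}^0(K)$ to that of $-\log\sin^2 d(\cdot,L)$; by continuous dependence of eigenvalues of Hermitian matrices, the $q+1$ positive eigenvalues remain bounded below by, for example, $c/2$ on $K$ whenever $L'$ is close enough to $L$. The main obstacle is the curvature bookkeeping: confirming that in the general (non-linear) case the ``bad'' singular directions remain confined to an $(n-q-1)$-dimensional subspace, leaving the complementary $(q+1)$-dimensional ``good'' subspace with only bounded, Fubini--Study-controlled contributions. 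The key input is that the normal jet of the squared K\"ahler distance to a complex submanifold has Hermitian leading part, mirroring the linear model.
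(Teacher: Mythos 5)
Your reduction to the linear model $\sin^2 d(z,\mathbb P^q)=|z''|^2/(1+|z|^2)$ and the identification of the good $(q+1)$-dimensional subspace $\mathbb C^q\oplus\mathbb C\cdot z''$, on which $i\partial\bar\partial(-\log|z''|^2)$ vanishes while the Fubini--Study form is positive, are correct and match the structure underlying the paper's computation. The gap is exactly the step you flag as ``curvature bookkeeping,'' and the mechanism you propose for closing it does not work. Writing $\sin^2 d(z,L)=\bar z''^{T}A\,z''+R$ with $A$ non-constant, the Levi form of $-\log(\bar z''^{T}Az'')$ differs from that of $-\log|z''|^2$ by terms in which derivatives of $z''/|z''|$ are contracted against derivatives of $A$; these are of size $O(|\nabla A|/|z''|)$ and $O(|\nabla^2A|)$, hence not bounded on a punctured neighborhood of $L$ (one must first check a cancellation to see they are bounded on the good subspace). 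More importantly, even restricted to the good subspace the correction near $p$ is comparable to $\|A\|_{\mathcal C^2}$, i.e.\ to the second fundamental form of $L$ and the curvature of the ambient metric: this is a fixed quantity that does not become small as $W'$ shrinks, while the Fubini--Study contribution you want to dominate it with is also a fixed $O(1)$ quantity. So ``shrinking $W'$ so that the bounded corrections are dominated by the strictly positive Fubini--Study contribution'' is not a valid step; the positivity on the good subspace in the nonlinear case requires an actual cancellation, not a perturbation off the model.

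The paper closes this gap with a comparison (barrier) argument that avoids differentiating the distance function to $L$ altogether: using Barth's formula $\sin^2 d(x^*,y^*)=1-|\langle x,\bar y\rangle|^2/(|x|^2|y|^2)$, it bounds $\sin^2 d(z,L)$ from above by $\sin^2 d\bigl((z',f_p(z'),1),(z',tz_0'',1)\bigr)$ --- the distance to a single point of $L$ chosen to vary holomorphically with $z'$ --- with equality at the base point $(0,z_0'',1)$ of the normal plane. Since $\rho\ge\psi$ with equality at a point forces the Levi form of $\rho$ to dominate that of $\psi$ there, the claim reduces to an explicit computation for this comparison function on the $(q+1)$-plane $(z',tz_0'')$, in which the second-order vanishing of the graph functions $f_p$ visibly relegates all curvature terms to $\mathcal O((z')^4)$, $\mathcal O((z')^2(t-1))$ and pluriharmonic errors. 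If you want to keep your direct approach you would have to carry out the analogous cancellation by hand on the subspace $\mathbb C^q\oplus\mathbb C\cdot z''$; as written, the key inequality is asserted rather than proved. (Your ``moreover'' paragraph is fine once the main estimate is in place.)
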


\begin{proof} The proof is classical, but we include it for the benefit of the reader and because we need
the estimates to be uniform in continuous variations of $L.$
Recall, Barth  [2], that if $x,y \in \mathbb C^{n+1}\setminus \{0\}$ and the corresponding points in $\mathbb P^n$ are $x^*,y^*$, then the distance $d(x^*,y^*)$ in the Fubini-Study metric is given by the formula:

$$\sin^2 (d(x^*,y^*))= 1-\frac{|<x,\overline{y}>|^2}{|x|^2|y|^2}
$$
\noindent where $<x,\overline{y}>=\sum x_i \overline{y}_i.$
So we are going to estimate the distance to
 a local piece of a smooth $q-$dimensional complex manifold $L.$ We choose a smooth collection of rotations $A_p$ of $\mathbb C^{n+1}$ for $p^*\in L$
so that $p^*=[0: \cdots : 0 : 1]$ and $L$ is given by $[z_1:\cdots :z_q: f^{q+1}_p(z_1, \dots,z_q): \cdots : f^n_p(z_1, \dots,z_q): 1]$
and the functions $f^j_p$ vanish to second order at the origin.
Let $N_L^p$ denote the complex normal plane $\{[0:\cdots :0: z_{q+1}: \cdots : z_n: 1] \}.$ We write $z'=(z_1,\dots,z_q),
z''=(z_{q+1},\dots, z_n)$. Also write $f_p(z')= (f^{q+1}_p(z_1, \dots,z_q), \dots , f^n_p(z_1, \dots,z_q)).$
Set $x=(z',f_p(z'),1)$ and $y=(0,z'',1).$
According to the above formula,
\bea
\sin^2 d(x^*,y^*) & = & 1- \frac{|1+ \sum_{j=q+1}^n \overline{z}_j f^j_p(z') |^2}{(1+|z'|^2+|f_p(z')|^2)(1+|z''|^2)}\\
& \geq & 1- \frac{1+C|z''||z'|^2}{(1+|z'|^2+|f_p(z')|^2)(1+|z''|^2)}\\
& \geq & 1-\frac{1}{1+|z''|^2}\\
& = & \sin^2 d(0, y^*).\\
\eea
To estimate the Levi form of a function $\rho$ we are going to use  that if $\rho\geq \psi$ and $\rho(p)=\psi(p)$ then the Levi form of $\rho$ at $p$ is larger than the Levi form of $\psi$ at $p.$ So we have to find appropriate functions $\psi.$
The above inequality shows that the points on $N_L^p$ are closest points to $p\in L$ and hence that the distance function to $L$ is smooth.
Next we estimate the Levi form of $\sin^2 d(L,y^*).$ Let $y= (z',z'',1)$, $y_0=(0,z_0'',1)$ and pick the point $x=(z',f_p(z'),1)$ on $L.$
We want to estimate the behaviour of $\sin^2 d(L,y^*)$ on the plane given by $(z',tz_0'',1), z'\in \mathbb C^q, t\in \mathbb C$
with $z'$ close to $0$ and $t $ close to $1$. So this is a $q+1-$dimensional plane through $y_0^*.$
We then have
\bea
A&:=&\sin^2 d(L, (z',tz_0'',1))- \sin^2 d((0,0,1),(0,z''_0,1)) \\
& \leq & \sin^2 d((z',f_p(z'),1), (z',tz_0'',1))- \sin^2 d((0,0,1),(0,z''_0,1)) \\
& = & [1-\frac{||z'|^2+ t\sum_{j={q+1}}^n\overline{f}^j_p(z') (z_0'')_j+1  |^2}{(|z'|^2+|f_p|^2+1)(|z'|^2+|t|^2 |z_0''|^2+1)}]\\
& - & [1-\frac{1}{(1+|z''_0|^2)}]\\
& = & \frac{(|z'|^2+|f_p|^2+1)(|z'|^2+|t|^2 |z_0''|^2+1)}
{(|z'|^2+|f_p|^2+1)(|z'|^2+|t|^2 |z_0''|^2+1)(1+|z''_0|^2)}\\
& -& \frac{||z'|^2+ t\sum_{j={q+1}}^n\overline{f}^j_p(z') (z_0'')_j+1  |^2(1+|z''_0|^2)}
{(|z'|^2+|f_p|^2+1)(|z'|^2+|t|^2 |z_0''|^2+1)(1+|z''_0|^2)}\\
& = & \mathcal O((z')^4,(z')^2(t-1))+\frac{|z_0''|^2(|t|^2-1)}
{(|t|^2 |z_0''|^2+1)(1+|z''_0|^2)}\\
& - & \frac{2\Re \sum_{j={q+1}}^n\overline{f}^j_p(z') (z_0'')_j}
{(1+|z''_0|^2)}\\
& - & \frac{|z_0''|^2|z'|^2}
{(1+|z''_0|^2)^2}\\
\eea
Write $\delta= \frac{(|t|^2-1)|z''_0|^2}{1+|z''_0|^2},$ then
\bea
\frac{1}{(|t|^2 |z_0''|^2+1)} & = &  \frac{1}{(1+|z''_0|^2)+(|t|^2-1)|z''_0|^2}\\
& = & \frac{1}{(1+|z''_0|^2)(1+\delta)} \\
& = & \frac{1}{1+|z_0''|^2} (1-\delta+\delta^2 +\cdots)\\.
\eea
We get then, if $\mathcal PO$ refers to pluriharmonic error terms.
\bea
A & \leq & \mathcal O((z')^4,(z')^2(t-1),|t-1|^3) + \frac{|z_0''|^2(|t|^2-1)}{(1+|z_0''|^2)^2} [1-\frac{(|t|^2-1)|z''_0|^2}{1+|z''_0|^2}]\\
& + & \mathcal PO((z')^2) -\frac{|z_0''|^2|z'|^2}
{(1+|z''_0|^2)^2}\\
\eea
So, 
\bea 
\sin^2 d(L, (z',tz_0'',1)) &  \leq & [1-\frac{1}{(1+|z''_0|^2)}]+\mathcal O((z')^4,(z')^2(t-1),|t-1|^3) \\
& + &  \frac{|z_0''|^2(|t|^2-1)}{(1+|z_0''|^2)^2} [1-\frac{(|t|^2-1)|z''_0|^2}{1+|z''_0|^2}]\\
& + &   {\mathcal {P}}{\mathcal{O}} ((z')^2) -\frac{|z_0''|^2|z'|^2}
{(1+|z''_0|^2)^2}.\\
\eea
Next we need to compose with $-\log (x)$. Using the comparison trick, we first consider the $z'$ direction: Let $A-B|z'|^2$
be the right hand side of the previous inequality. Define
\bea 
r & := & -\log [A-B|z'|^2].\; {\mbox{Then}}\\
r_{z} &= & \frac{B\overline{z}}{1-B|z'|^2}\; {\mbox{and}}\\
r_{z\overline{z}}(0) & = & B >0\\
\eea
For the $t$ direction at $t=1,$ we decompose $r$ differently,
\bea
r & := & -\log (A+B(|t|^2-1)-C (|t|^2-1)^2)\\
r_t & = & -\frac{B \overline{t}-2C(|t|^2-1)\overline{t}}{ (A+B(|t|^2-1)-C (|t|^2-1)^2)}\\
r_{t\overline{t}}(1) & = &  \frac{[B][B]-[B-2C][A]}{ A^2}\\
& = & \frac{B^2-AB+2AC}{A^2}\\
\eea
\bea
B^2-AB+2AC & = &[ \frac{|z_0''|^2}{(1+|z_0''|^2)^2} ]^2-  [1-\frac{1}{(1+|z''_0|^2)}] \frac{|z_0''|^2}{(1+|z_0''|^2)^2}\\
&  + & 2[1-\frac{1}{(1+|z''_0|^2)}] \frac{|z_0''|^4}{(1+|z_0''|^2)^3} \\
\eea
\bea
& = & \frac{|z_0''|^4}{(1+|z_0''|^2)^4} -  \frac{|z_0''|^4}{(1+|z_0''|^2)^3}  + 2 \frac{|z_0''|^6}{(1+|z_0''|^2)^4} \\
& = &  \frac{|z_0''|^4}{(1+|z_0''|^2)^4}[1-(1+|z''|^2)+2|z_0''|^2]\\
& = &  \frac{|z_0''|^6}{(1+|z_0''|^2)^4}>0\\
\eea

These calculations show that $-\log \sin^2 d(L,y)$ is strictly plurisubhamonic on a $q+1-$dimensional plane in a neighborhood of $L.$
\end{proof}

Observe that on compact subsets of $W'\setminus L$ any small $\mathcal C^2$ perturbation of $-\log \sin^2 d(z,L):= \rho(z,L)$ is still $q+1$-convex. 
We have a lower bound on the eigenvalues of the Levi form if $L'$ is close to $L.$
In a tubular neighborhood $T$ of $X$ we have 
$$-\log \sin^2 d(z,X)= \sup_{i \in I} -\log \sin^2 d(z,L_i)$$ with $L_i$ a countable family of plaques. 

\medskip

We are ready to finish the proof of Theorem 1.6.
\begin{proof}
Fix an affine chart $U$ of $\mathbb P^n$. 
Let $T$ be a tubular neighborhood of $X$ in $\mathbb P^n.$ Fix $p\in X.$ It is possible to find real hyperplanes $P_1,P_2$ in the Euclidean sense and distant of $R$ large enough, so that in a neighborhood of $p$ in $T \cap U$, we have $d(z,X)=\min_i d(z,L_i)$ with $L_i$ a countable family of the intersection of leaves with the region between $P_1$ and $P_2.$ Here $d$ denotes the distance in the Fubini Study metric. Let $\rho(z,L_i)=
-\log \sin^2 d(z,L_i),$ the functions constructed in Lemma 3.1, associated to the complex manifold $L_i$. We can write $T=\cup T_j$, $T_j= \{z;
\frac{1}{2^{j+1}} \leq d(z,X)< \frac{1}{2^j}\}.$ 

Let $\chi_j$ be a smooth function supported on $T_{j+1} \cup T_j \cup T_{j-1}$ with value $1$ on $T_j.$ Let $\alpha(t):= -\log \sin^2 t$ for $t>0.$ 

Using the uniformity of Lemma 4.1 it is possible to find finitely many $L_i's, i\in I_j$ so that on a neighborhood of $\overline{T}_j$, the function $\max_{i\in I_j} (\rho(z,L_i)+c_j \chi_j)$ is larger than $\alpha(d(z,X))$ and is strictly $q+1-$ convex with corners. This permits to construct a function $\rho$ which is strictly $q+1-$ convex with corners and which is a small perturbation of the function $\alpha(d(z,X)).$ Observe that on $T_{j+1} \cup T_j \cup T_{j-1},$ $\rho$ is obtained as a perturbation of the above function and we do the construction inductively on $j.$ Once we have obtained this strictly $q+1-$convex function with corners which goes to infinity near $X,$ we apply Peternell's Theorem 1.6 [12].

\end{proof}


\section{Examples in dimension 2}

In dimension 2, it is well known that  we do not have Hartogs' extendability for line bundles. Consider the line bundle in $\mathbb C^2 \setminus \{(0,0)\}$ given in $z \neq 0, w \neq 0$ by the transition function $f(z,w)=\exp (\frac{1}{zw}).$ It does not extend through $(0,0).$ If it did, we would have $f=\frac{f_1}{f_2}$ in a neighborhood of $(0,0)$ with $f_i$ holomorphic in $z \neq 0$ and $f_2$ holomorphic in $w\neq 0.$ Using Laurent series expansion, one checks easily that this is impossible.

We can construct also an example of a line bundle in the complement of the unit ball, with the unit sphere as natural boundary.
Let $\{(z_i,w_i)\}$ denote a dense
set of points on the boundary of the unit ball.
Let $f=\sum_i \epsilon_i \frac{1}{(z-z_i)(w-w_i)}.$ If the $\epsilon_i$ go to zero fast enough,
then this is a well defined holomorphic function on the set $\{1<|z|<2,1<|w|<2\}$ Hence we can use $g=e^f$ as transition function for a holomorphic line bundle on $\{|z|,|w|<2\}\setminus \{|z|\leq 1,|w|\leq 1\}.$ We show that this line bundle extends to $\{|z|,|w|<2\}\setminus \{|z|^2+|w|^2 \leq 1\}$, but that it does not have an extension  across any point in the boundary of the unit ball.

We can divide the $(z_i,w_i)$ into finitely many groups of points, lying in the same small polydisc.
Then for each such small polydisc the corresponding line bundle is holomorphic in the complement
of this polydisc. The line bundle is the product of these. Hence the line bundle extends to the complement of any $B(0,1+\delta).$ We need to show it does not extend through any point $(z_i,w_i)$.
We choose any small polydisc $D$ around $(z_i,w_i)$. containing only points $(z_j,w_j)$ with
$\epsilon_j$ very small compared to $\epsilon_i.$ Cover the rest with polydiscs also. Then the line bundle for each of the other polydiscs extends across $(z_i,w_i)$ so the obstruction to extension comes only from $D.$ Assume for simplicity that $z_i=w_i=0.$ We note that the corresponding additive Cousin problem is a holomorphic function which can be Laurent series expanded and has terms with negative powers in both $z$ and $w$ at the same time. These can not be written as a difference of two holomorphic functions as needed because these can have only poles in one variable at a time. So this shows non-extension.

\section{Levi flat manifolds in open sets in $\mathbb P^n.$}

 Lins Neto [10] has shown the nonexistence of real analytic Levi flat hypersurfaces $\Sigma$ in $\mathbb P^n$ for $n \geq 3$. He uses the extension of the foliation of $\Sigma$ as a holomorphic foliation of codimension $1$ in $\mathbb P^n.$ Siu [13] has shown the non existence of a smooth Levi flat hypersuface in $\mathbb P^n, n \geq 3,$ foliated by leaves of codimension $1.$ M. Brunella (personal communication) has observed that there are no exceptional minimal sets for foliations of dimension $q \geq \frac{n+1}{2}$ in $\mathbb P^n$ as a consequence of the Baum-Bott formula.

We have the following result regarding Levi-flat manifolds in open subsets of $\mathbb P^n$:

\begin{theorem}
Let $\Omega\subset\mathbb P^n$ be an open set, and 
assume that $\mathcal F$ is a $q$-dimensional holomorphic
foliation on $\Omega$ with $q \geq  \frac{2n}{3}.$
Then there 
does not exist a $\mathcal C^1$-smooth $\mathcal F$-invariant
submanifold $\Sigma\subset\mathbb P^n$, $\Sigma\subset\Omega$,
such that $\Sigma=\cap_{j=1}^{n-q}\Sigma_{j}$, where 
$\Sigma_j$ is a hypersurface in $\mathbb P^n$, and the 
normals to the $\Sigma_j$'s are everywhere complex 
linearly independent on $\Sigma$.
\end{theorem}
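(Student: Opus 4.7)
My approach is to argue by contradiction, following the Ohsawa-type strategy sketched in the introduction: extend a suitable line bundle associated to $\mathcal F$, together with its roots, from a neighborhood of $\Sigma$ to all of $\mathbb P^n$, and derive a contradiction from the resulting triviality. Suppose such a $\Sigma$ exists. The $\mathcal F$-invariance forces every leaf of $\mathcal F$ meeting $\Sigma$ to lie in $\Sigma$, so $\Sigma$ is a compact set laminated by complex $q$-dimensional submanifolds of $\mathbb P^n$. Theorem 1.5 then gives that $\mathbb P^n\setminus\Sigma$ is strictly $(q+1)$-complete with corners; since $q\geq 2n/3$ is exactly equivalent to $q+1\geq(2n+3)/3$, the smoothing procedure behind Theorem 1.8 yields a smooth $\tilde q$-convex exhaustion $\tilde\rho$ of $\mathbb P^n\setminus\Sigma$ with $\tilde q\geq 3$, placing us in the regime of Theorem 1.7.

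Next I set $L=\det N\mathcal F$, a holomorphic line bundle on $\Omega$, and let $\rho_1,\dots,\rho_{n-q}$ be smooth defining functions for $\Sigma_1,\dots,\Sigma_{n-q}$. Since each $\rho_j$ is constant on every leaf of $\mathcal F|_\Sigma$ (by $\mathcal F$-invariance of $\Sigma\subset\Sigma_j$), the form $\partial\rho_j|_\Sigma$ annihilates $T\mathcal F|_\Sigma$ and is therefore a section of the rank-$(n-q)$ conormal bundle $N^*\mathcal F|_\Sigma$. The complex linear independence hypothesis makes the $\partial\rho_j|_\Sigma$'s a smooth frame, so $L|_\Sigma$ is topologically trivial. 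Choosing a tubular neighborhood $V\subset\Omega$ of $\Sigma$ that retracts onto $\Sigma$, we obtain $c_1(L|_V)=0$, and since $H^1(V,\mathcal O)$ is a complex vector space and hence divisible, the exponential short exact sequence supplies holomorphic $k$-th roots $L_k$ of $L|_V$ for every $k\in\mathbb N$.

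For $c$ sufficiently large $\{\tilde\rho>c\}\subset V\setminus\Sigma$, and Theorem 1.8 extends the restrictions of $L$ and each $L_k$ uniquely to line bundles $\tilde L,\tilde L_k$ on $\mathbb P^n\setminus\Sigma$. The uniqueness statement in Lemma 2.7, applied to $L\otimes\tilde L^{-1}$ on the $\Sigma$-end of $V\setminus\Sigma$ (which inherits the required convexity from $\mathbb P^n\setminus\Sigma$), forces $\tilde L\cong L$ on the overlap $V\setminus\Sigma$, so these bundles glue to line bundles $\hat L,\hat L_k$ on $V\cup(\mathbb P^n\setminus\Sigma)=\mathbb P^n$, satisfying $\hat L_k^{\otimes k}\cong\hat L$. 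Since $\mathrm{Pic}(\mathbb P^n)\cong\mathbb Z$, we may write $\hat L=\mathcal O(d)$; the existence of $k$-th roots for every $k$ forces $d=0$, so $\hat L\cong\mathcal O_{\mathbb P^n}$ is trivial and in particular $L|_V$ is holomorphically trivial.

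The triviality of $L|_V$ produces a nowhere vanishing holomorphic section of $L^{-1}|_V=\det N^*\mathcal F|_V$, that is, a nowhere vanishing holomorphic $(n-q,0)$-form $\omega$ on $V$ whose kernel at each point equals $T\mathcal F$. Viewing the natural inclusion $L^{-1}\hookrightarrow\Omega^{n-q}_{\mathbb P^n}|_V$ as a holomorphic section of the twisted vector bundle $L\otimes\Omega^{n-q}_{\mathbb P^n}|_V$, I then extend it, by the same Andreotti--Grauert $\overline\partial$-machinery that underlies Theorem 1.7 (now applied with values in $L\otimes\Omega^{n-q}_{\mathbb P^n}$ rather than in $\mathcal O$), to a global section of $\hat L\otimes\Omega^{n-q}_{\mathbb P^n}=\Omega^{n-q}_{\mathbb P^n}$ on $\mathbb P^n$. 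Since $H^0(\mathbb P^n,\Omega^{n-q}_{\mathbb P^n})=0$ for $0<n-q\leq n$, the extended section must vanish identically, contradicting the non-vanishing of $\omega$ on $\Sigma$. The main obstacle I anticipate is precisely this last step: it is the section-level analogue of Theorem 1.8, and one has to verify that the $q\geq 2n/3$ bound continues to suffice when extending holomorphic sections of the vector bundle $\Omega^{n-q}_{\mathbb P^n}$ in addition to line bundles; a secondary subtlety is ensuring that the gluing in the previous paragraph produces truly compatible bundles on the entire overlap $V\setminus\Sigma$, and not merely on the subset $\{\tilde\rho>c\}$ where the agreement is obvious.
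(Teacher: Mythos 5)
Your proposal is correct and follows essentially the same route as the paper: establish a $3$-convex exhaustion of $\mathbb P^n\setminus\Sigma$ via Theorems 1.6 and 1.8, use the transverse frame coming from the $\Sigma_j$'s to make the determinant of the normal bundle topologically trivial (hence with roots of all orders) near $\Sigma$, extend the bundle and its roots to $\mathbb P^n$ to force triviality, and contradict $H^0(\mathbb P^n,\Omega^{n-q}_{\mathbb P^n})=0$ via Hartogs extension of the resulting $(n-q)$-form. The only differences are presentational (conormal frame versus normal vector fields, $\mathrm{Pic}(\mathbb P^n)=\mathbb Z$ versus vanishing Chern class, and placing the form-extension at the end rather than as an a priori nontriviality argument), and the section-level extension you flag as a worry is exactly the "Hartogs extension" step the paper itself invokes.
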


\begin{proof}
 We assume to get a contradiction that there exists
such a manifold $\Sigma$.  Note that by Theorem 1.6 and  Theorem 3.7 we have that 
$\mathbb P^n\setminus\Sigma$ admits a smooth 3-convex exhaustion 
function $\rho$.

We cover $\Sigma$ by a finite number of balls $B_j$
such that $\mathcal F$ is defined by 
holomorphic $(n-q)$-forms $\omega_j=df_j^1\wedge\cdots\wedge 
df_j^{n-q}$ on $B_j$, and these give rise to a line
bundle $\mathcal N:=(f_{ij})$ with $f_{ij}=\omega_i/\omega_j$.
Note that $\mathcal N$ is holomorphically nontrivial:
a splitting $f_{ij}=f_j/f_i$ would give 
rise to a holomorphic form $\omega:=f_j\cdot\omega_j$
on $\Omega$.  By Hartogs extension, we have that $\omega$
would extend to all of $\mathbb P^n$, but it is 
known that $\mathbb P^n$ does not support a nontrivial 
holomorphic $m$-form
for any $m\geq 1$. 

On the other hand there exist $n-q$ non-vanishing continuous 
vector fields $\xi_j$ on $\Sigma$, each pointing in 
the normal direction to $\Sigma_j$, and such that 
they span an $(n-q)$-dimensional complex space over $\mathbb C$. 
This means that $\psi_j:=\omega_j(\xi_1,...,\xi_{n-q})$
defines a continuous splitting of $(f_{ij})$ over $\Sigma$, 
hence $\mathcal N\rightarrow\Sigma$ is topologically trivial, 
and consequently has roots of all orders.  This holds
also in a neighborhood retract to $\Sigma$. 

By Theorem 3.6 we have that $\mathcal N$ along 
with all its roots $\mathcal N_k$, \emph{i.e.}, $\mathcal N_k^{\otimes k}=\mathcal N$,
extends uniquely to $\mathbb P^n$.  It follows that the 
Chern class of (the extended) $\mathcal N$ is zero, and 
this contradicts the non-triviality of $\mathcal N$ on $\Omega$.
\end{proof}

\begin{remark}
An argument as above also gives the non-existence of real analytic 
Levi-flat hypersurfaces in $\mathbb P^n$ for $n\geq 3$: the Levi 
foliation of such a hypersurface $\Sigma$ extends to a neighborhood 
of $\Sigma$, and by the solution to the Levi-problem we have 
that $\mathbb P^n\setminus\Sigma$ is Stein.  
\end{remark}

\end{document}